\newtheorem{thm}{Theorem}[section]
\newtheorem{lem}[thm]{Lemma}
\newtheorem{cor}[thm]{Corollary}
\newtheorem{prop}[thm]{Proposition}
\newtheorem*{theorem}{Theorem}
\theoremstyle{definition}
\newtheorem*{rem}{Remark}
\newtheorem*{rems}{Remarks}
\newtheorem*{thanksto}{Acknowledgements}
\mathchardef\mhyphen="2D
\title{Orlov's Equivalence and Maximal Cohen-Macaulay Modules over the Cone of an Elliptic Curve}
\author{Lennart Galinat}
\date{}
\begin{document}
\thispagestyle{empty}
\maketitle
\begin{abstract}
	We describe a method for doing computations with Orlov's equivalence between the bounded derived category of certain hypersurfaces and the stable category of graded matrix factorisations of the polynomials describing these hypersurfaces. In the case of a smooth elliptic curve over an algebraically closed field we describe the indecomposable graded matrix factorisations of rank one. Since every indecomposable Maximal Cohen-Macaulay module over the completion of a smooth cubic curve is gradable, we obtain explicit descriptions of all indecomposable rank one matrix factorisations of smooth cubic potentials. Finally, we explain how to compute all indecomposable matrix factorisations of higher rank with the help of a computer algebra system.
\end{abstract}
\thispagestyle{empty}
\addcontentsline{toc}{section}{Introduction}
\tableofcontents
\setcounter{page}{1}
\section*{Introduction}
	Let $K$ be a field, let $f \in K[X_0, \hdots, X_n]$ be a homogeneous polynomial of degree $n+1$ and let $X = \mathsf{Proj}\Big(K[X_0, \hdots, X_n]/(f)\Big)$ be the projective scheme associated to $f$. By a theorem of Orlov \cite{Orlov09} there is an equivalence $$\Phi: \mathcal{D}^b(\mathsf{Coh}(X)) \cong \underline{\mathsf{MF}}(f) $$ between the bounded derived category of coherent sheaves on $X$ and the homotopy category of graded matrix factorisations of $f$. It is only natural to want to use this to transfer questions about objects on one side to the other in the hope of finding an easier answer there. Therefore we might ask the following questions:
	\begin{enumerate}
		\item What is the action of $\mathsf{Pic}(X)$, the Seidel-Thomas twist $\mathsf{T}_{\mathscr{O}_X}$, or the duality functor $\mathbb{D} = \mathsf{Hom}(-,\mathscr{O}_X)$ on the right hand side?
		\item What are the images of "natural" objects like $\mathscr{O}_X$ or the residue fields $\kappa(x)$ for $x \in X$ on the right hand side?
	\end{enumerate}
	In this article we will describe the following answers:
	\begin{enumerate}
		\item By a result of Ballard, Favero and Katzarkov \cite{BFK11} it is known that $\Phi \circ \mathsf{T}_{\mathscr{O}_X} \circ (\mathscr{O}_X(1) \otimes -) \cong (1) \circ \Phi$. Using this we can describe the action of the whole Picard group of $X$ if the hypersurface $X$ is irreducible of dimension bigger than $2$. Furthermore we prove that the autoequivalence $\mathbb{D}$ corresponds to the composition $(-)^t \circ \mathsf{T}_{\Phi(\mathscr{O}_X)}$.
		\item We explain how to solve question two using a computer algebra system such as SINGULAR \cite{SINGULAR}. In the case of the structure sheaf it boils down to computing the "2-periodic" part of a minimal graded projective resolution of the residue field $K$ over the ring $K[X_0, \hdots, X_n]/(f)$ and similarly in the case of a rational point $p=[p_0:\hdots:p_n] \in X$ $\Phi(\kappa(p))$ (let us assume $p_i=1$) can easily be computed from finding the "2-periodic" part of a minimal graded projective resolution of the module $K[X_0, \hdots, X_n]/(X_0-p_0X_i,\hdots, X_n-p_nX_i)$.
	\end{enumerate}
	As an application we calculate the matrix factorisations of the rank one Maximal Cohen-Macaulay (MCM) modules over the complete local ring $K[[X,Y,Z]]/(f)$ for an algebraically closed field $K$ of arbitrary characteristic and $f$ a smooth Weierstraß polynomial. Kahn shows that this is a representation-tame problem and even described its Auslander-Reiten quiver in \cite{Kahn89} using Atiyah's classification of vector bundles on an elliptic curve \cite{Atiyah57}. In particular he proves that there are three families of rank one MCM modules over such a ring. But up to now, the concrete matrix factorisations for these modules were unknown. We will show that they are given by the following theorem, where $T=K[[X,Y,Z]]$, $S=T/(Y^2Z-X^3-aXZ^2-bZ^3)$, $E=\mathsf{Proj}\Big(K[X,Y,Z]/(Y^2Z-X^3-aXZ^2-bZ^3)\Big)$, $e=[0:1:0]$, $K$ is an algebraically closed field of arbitrary characteristic and $-4a^3-27b^2 \neq 0$. Here the restriction to simplified Weierstraß equations is unnecessary and only done as to achieve a nicer looking result: All of the computations presented in Chapter 2 can also be carried out in the case of an arbitrary Weierstraß cubic without the need for any additional arguments.
\begin{theorem}
	Let $P_E(\lambda,\mu)=-X^2-\lambda XZ-(a+\lambda^2)Z^2$. Then the following matrix factorisations are mutually non-isomorphic and describe all indecomposable rank one Maximal Cohen-Macaulay $S$-modules, where $[\lambda,\mu,1]$ runs through all rational points of $E-{e}$:
	\[
		T^2 \xrightarrow{\begin{pmatrix} P_E(\lambda,\mu) & -Z(Y + \mu Z)\\ Y - \mu Z & X- \lambda Z \end{pmatrix}}  T^2 \xrightarrow{\begin{pmatrix} X - \lambda Z & Z(Y + \mu Z) \\  \mu Z -Y & P_E(\lambda,\mu) \end{pmatrix}} T^2
	\]
	\\
	\[
		T^2\xrightarrow{\begin{pmatrix} -X^2-aZ^2 & bZ^2-Y^2\\ -Z & -X \end{pmatrix}} T^2 \xrightarrow{\begin{pmatrix} X & bZ^2-Y^2 \\ -Z & X^2+aZ^2 \end{pmatrix}}  T^2
	\]
	\\
	\[
		T^2 \xrightarrow{\begin{pmatrix}
			P_E(\lambda,\mu) & -Y-\mu Z \\
			-Z(Y-\mu Z) & \lambda Z - X
		\end{pmatrix}} T^2 \xrightarrow{\begin{pmatrix}
			X - \lambda Z & -Y - \mu Z \\
			-Z(Y - \mu Z) & -P_E(\lambda,\mu)
		\end{pmatrix}} T^2
	\]
	\\
	\[
		T^2 \xrightarrow{\begin{pmatrix}
			aZX - Y^2 + bZ^2 & -X \\
			-X^2 & -Z
		\end{pmatrix}} T^2 \xrightarrow{\begin{pmatrix}
			-Z & X \\
			X^2 & bZ^2-Y^2 + aXZ
		\end{pmatrix}} T^2
	\]
	\\
	\[
		T^3 \xrightarrow{\begin{pmatrix}
			P_E(\lambda,\mu) & -Z(Y + \mu Z) & \lambda \mu Z^2 + XY + \mu XZ + \lambda YZ \\
			-X(Y- \mu Z) & -X(X- \lambda Z) & -(a + \lambda^2) XZ +Y^2 -bZ^2\\
			-Z(Y- \mu Z) & -Z(X- \lambda Z) & X^2 + \lambda^2 Z^2
		\end{pmatrix}}
	\] 
	\[
		T^3 \xrightarrow{\begin{pmatrix}
			X- \lambda Z  & 0 & -Y - \mu Z\\
			\mu Z -Y & X + \lambda Z & (a + \lambda^2)Z \\
			0 & Z & -X
		\end{pmatrix}} T^3
	\]
	\\
	\[ T \xrightarrow{\begin{matrix} \; 1 \;\end{matrix}} T \xrightarrow{\begin{matrix} Y^2Z-X^3-aXZ^2-bZ^3\end{matrix}} T \]
\end{theorem}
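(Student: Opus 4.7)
The plan is to transport the classification across Orlov's equivalence $\Phi: \mathcal{D}^b(\mathsf{Coh}(E)) \cong \underline{\mathsf{MF}}(f)$, exploit Kahn's description of the indecomposable rank one MCM modules as three one-parameter families each parametrized by $E$, and then compute $\Phi$ explicitly on chosen representatives by the recipe given in the introduction.

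First I would invoke the gradeability of indecomposable MCM modules over the completion of a smooth cubic to pass from $S$-modules to graded $R$-modules with $R = K[X,Y,Z]/(f)$, so that Orlov's equivalence applies, and then by Kahn's theorem fix three one-parameter families of indecomposables on the sheaf side. The natural candidates are three line bundles of distinct degrees (i.e., translates under $\mathsf{Pic}^0(E) \cong E$ of three fixed degree representatives), with the base point $p = e$ producing a degenerate member of each family --- in particular the family containing $\mathscr{O}_E$ itself will produce the free rank one MCM module $S$ at $p = e$, giving the trivial factorisation (6).

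Next, for each family member I would compute the matrix factorisation algorithmically: pick a graded $R$-module $M$ with $\widetilde{M} \cong \mathcal{F}_p$, compute a minimal graded projective resolution of $M$, and read off its eventually 2-periodic tail. For a generic rational point $p = [\lambda:\mu:1] \neq e$ the relevant prime ideal is $(X - \lambda Z,\ Y - \mu Z)$; the Weierstrass relation enters through the syzygies, accounting for the repeated appearance of the polynomial $P_E(\lambda,\mu) = -X^2 - \lambda X Z - (a+\lambda^2) Z^2$ in the matrix entries of (1), (3), (5). For the point at infinity $e = [0:1:0]$ one instead uses the ideal $(X, Z)$, and the resolutions degenerate into the factorisations (2) and (4) (and (6) in the trivial case).

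Finally, I would verify pairwise non-isomorphism and exhaustiveness. Non-isomorphism within a family is inherited from $\Phi$ being an equivalence together with distinctness of the $\mathcal{F}_p$ for distinct $p \in E$; across families one distinguishes via invariants such as the size of the presenting matrix, the support of the cokernel, or the behaviour under the autoequivalences computed earlier (twist by $\mathscr{O}_E(1)$, Seidel--Thomas twist, duality). Exhaustiveness is immediate from Kahn once all three families have been exhibited. The subtlest step I anticipate is the $3 \times 3$ family (5): identifying a clean sheaf-side representative and reducing its minimal resolution to the stated compact form will require careful bookkeeping of syzygies, and I expect to bootstrap using the compatibility $\Phi \circ \mathsf{T}_{\mathscr{O}_E} \circ (\mathscr{O}_E(1) \otimes -) \cong (1) \circ \Phi$ to reduce to a smaller factorisation already computed.
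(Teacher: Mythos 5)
Your computational core coincides with the paper's: reduce to graded modules via Kahn's gradability result, then transport line bundles through $\Phi$ by resolving point modules and the irrelevant ideal and assembling the remaining factorisations as cones and twists. The genuine gap is in your completeness argument. You propose to get exhaustiveness ``immediately from Kahn once all three families have been exhibited,'' but exhibiting three $E$-parametrised families of pairwise non-isomorphic indecomposable rank one MCM $S$-modules only produces an injection of three copies of $E$ into the set of isomorphism classes; since these are infinite sets, surjectivity does not follow from Kahn's count of ``three families'' unless you actually identify your families with his, which you do not do. The paper never uses Kahn's count at all --- only his gradability statement --- and instead closes the argument directly: (i) by Herzog--K\"uhl, a rank one MCM module over this cubic hypersurface has a matrix factorisation of size at most $3$; (ii) after normalising the grading via the determinant condition of Burban--Drozd, the module lies in $\mathcal{T}_1$, so its Orlov preimage is literally its associated sheaf, a line bundle whose degree is bounded; (iii) line bundles of degree at most $-4$ have matrix size at least $4$ (Lemma \ref{lemma-too big}), and the grading shift $(1)$ corresponds on the sheaf side to $\mathsf{T}_{\mathscr{O}_E}\circ(\mathscr{O}_E(1)\otimes -)$, so the window $-3 \le \deg < 0$, excluding $\mathscr{O}_E(-3e)$, is a complete and irredundant list. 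You would need a substitute for (i)--(iii); note also that the shift action does not simply ``translate a fixed degree representative,'' since $\mathsf{T}_{\mathscr{O}_E}$ can even destroy local freeness, so pinning down which three degree classes occur is not cosmetic.

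A second, smaller gap: you assert that within-family distinctness is ``inherited from $\Phi$ being an equivalence,'' but $\Phi$ distinguishes \emph{graded} modules, whereas the theorem classifies $S$-modules, and two indecomposable graded MCM modules complete to isomorphic $S$-modules precisely when they differ by a shift $(n)$ (Yoshino, Lemma 15.2). You must therefore additionally rule out that distinct objects in your list become identified after forgetting the grading --- this is exactly why $\mathscr{O}_E(-3e)$, being $\mathscr{O}_E(-1)$, collapses to the free module, and why nontrivial degree zero bundles are omitted as shifts of degree $-3$ ones. The paper dispatches the residual check (no shifts of the $2\times 2$ families are isomorphic) by a grading argument that your proposal omits. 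Relatedly, ``size of the presenting matrix'' cannot separate your first two families, which are both $2\times 2$; the cross-family distinction also ultimately rests on the shift analysis.
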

	This extends earlier work of Laza, Pfister and Popescu who considered the Fermat polynomial $f=X^3+Y^3+Z^3$ in \cite{LPP02}.
	\\Unfortunately it seems out of reach to produce explicit matrix factorisations for all indecomposable MCM modules of higher rank. The best we can do is to describe a method how one can compute them with the help of a computer.
\begin{thanksto}
	I would like to thank my advisor Igor Burban for introducing me to Orlov's work, for setting interesting tasks and for his helpful advice. This work was supported by the DFG grant Bu-1866/3-1.
\end{thanksto}
\section{Computing with Orlov's Equivalence}
\subsection{Notations and Choice of the Equivalence}
	Let $A = K[X_0,\hdots, X_n]/(f)$ where $f$ is a homogeneous polynomial of degree $n+1$ (the grading on $K[X_0,\hdots,X_n]$ is given by $\mathsf{deg}(X_i)=1$ for all $0 \leq i \leq n$) and let $X= \mathsf{Proj}(A)$ be the associated projective hypersurface. Denote the smallest triangulated subcategory of $\mathcal{D}^b(\mathsf{gr}A)$ which contains the residue fields $K(j)$ for $j>-i$ by $\mathcal{S}_{<i}$. Similarly define $\mathcal{S}_{\geq i}$. Denote the smallest triangulated subcategory of $\mathcal{D}^b(\mathsf{gr}A)$ which contains the graded free modules $A(j)$ for $j>-i$ by $\mathcal{P}_{<i}$. Similarly define $\mathcal{P}_{\geq i}$. By results of Orlov \cite{Orlov09}, we have semiorthogonal decompositions $\mathcal{D}^b(\mathsf{gr}A) = \langle \mathcal{S}_{<i}, \mathcal{D}_i, \mathcal{S}_{\geq i} \rangle$, $\mathcal{D}^b(\mathsf{gr}A) = \langle \mathcal{P}_{\geq i}, \mathcal{T}_i, \mathcal{P}_{< i} \rangle$ and an equality $\mathcal{D}_i = \mathcal{T}_i$ for all $i \in \mathbb{Z}$.
	\\Furthermore we have the following commutative diagram of triangulated categories and exact functors
\begin{equation}
	\label{diag}
	\begin{xy}
		\xymatrix{
		& \mathcal{D}^b(\mathsf{gr}A) \ar[dl]_{\widetilde{(-)}} \ar[d] \ar[rd]^{\gamma_i} &  \mathcal{D}^b(\mathsf{gr}A) \ar[d]^{\delta_i} & \mathsf{MF}(f) \ar[l]_-{\mathsf{cok}} \ar[d]^{p} \\
		\mathcal{D}^b(\mathsf{Coh}(X)) & \mathcal{D}^b(\mathsf{qgr}A) \ar[l]_-{\simeq} \ar[r]^-{\simeq} & \mathcal{D}_i = \mathcal{T}_i & \underline{\mathsf{MF}}(f) \ar[l]^-{\mathsf{cok}_i}_-{\simeq}
		}
	\end{xy}
\end{equation}
where $\mathcal{D}^b(\mathsf{qgr}A) = \mathcal{D}^b(\mathsf{gr}A)/\mathcal{D}^b(\mathsf{tors}A)$, where $\widetilde{(-)}$ denotes Serre's functor (cf. \cite{Serre55}) and where the unnamed functor and $p$ are quotient functors. The lower left functor is an equivalence by results of Miyachi \cite{Miyachi91}. The functors $\gamma_i$ are given as the composites $\mathcal{D}^b(\mathsf{gr}A) \xrightarrow{\mathsf{tr}_{\geq i}} \mathcal{D}^b(\mathsf{gr}A_{\geq i}) \rightarrow \mathcal{D}_i$, where the unnamed arrow is the left adjoint to the inclusion of $\mathcal{D}_i$ in $\mathcal{D}^b(\mathsf{gr}A_{\geq i})$ and similarly for the functors $\delta_i$. By the theory of semiorthogonal decompositions (see for example \cite{Bondal89} or \cite{BK89}) these are quotient functors. Using that the duality $\mathsf{RHom}_{gr}(-,A): \mathcal{D}^b(\mathsf{gr}A) \rightarrow \mathcal{D}^b(\mathsf{gr}A)$ sends semiorthogonal decompositions to semiorthogonal decompositions, but exchanges the order, and the proof of Lemma 2.3 in \cite{Orlov09}, we can give an explicit description of $\gamma_i$ as the composite
$$\mathcal{D}^b(\mathsf{gr}A) \xrightarrow{\mathsf{tr}_{\geq i}} \mathcal{D}^b(\mathsf{gr}A) \xrightarrow{\mathsf{RHom}(-,A)} \mathcal{D}^b(\mathsf{gr}A) \xrightarrow{\mathsf{tr}_{\geq i-1}} \mathcal{D}^b(\mathsf{gr}A) \xrightarrow{\mathsf{RHom}(-,A)} \mathcal{D}^b(\mathsf{gr}A)$$
where $\mathsf{tr}_{\geq i}$ is the exact functor which sends a graded $A$-module $M$ to the module $\mathsf{tr}_{\geq i}(M)$ which is defined as $\mathsf{tr}_{\geq i}(M)_j := \begin{cases} M_j & j \geq i \\ 0 & else. \end{cases}$
\\Define $\Phi_i : \mathcal{D}^b(\mathsf{Coh}(X)) \rightarrow \underline{\mathsf{MF}}(f)$ to be the composite of the exact equivalences in the lower row of the diagram above. The different choices of the integer $i$ are related as follows:
\begin{lem}
	\label{lem-nochoice}
	We have $(1) \circ \Phi_i \circ (\mathscr{O}_X(-1) \otimes -) \cong \Phi_{i-1}$ for all $i \in \mathbb{Z}$.
\end{lem}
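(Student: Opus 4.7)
The plan is to trace the isomorphism through each of the three equivalences making up $\Phi_i$, namely
\[
\mathcal{D}^{b}(\mathsf{Coh}(X)) \xleftarrow[\simeq]{\widetilde{(-)}} \mathcal{D}^{b}(\mathsf{qgr}A) \xrightarrow[\simeq]{\sigma_i} \mathcal{D}_i \xleftarrow[\simeq]{\mathsf{cok}_i} \underline{\mathsf{MF}}(f),
\]
and verify that each intertwines the grading shift $(1)$ (or tensoring with $\mathscr{O}_{X}(1)$) in a way that changes the index $i$ by one. Then assembling the three compatibilities will give exactly the claimed isomorphism $(1)\circ\Phi_i\circ(\mathscr{O}_X(-1)\otimes-)\cong\Phi_{i-1}$.

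First I would establish three preparatory facts. (a) Serre's equivalence satisfies $\widetilde{M(1)}\cong\widetilde{M}\otimes\mathscr{O}_X(1)$; this is a standard property of $\mathsf{Proj}$ and already underlies the identification $\mathcal{D}^{b}(\mathsf{qgr}A)\simeq\mathcal{D}^{b}(\mathsf{Coh}(X))$. (b) The shift functor $(1)$ on $\mathcal{D}^{b}(\mathsf{gr}A)$ sends the semiorthogonal decomposition indexed by $i$ to the one indexed by $i-1$. Indeed, $K(j)(1)=K(j+1)$, and $j>-i$ is equivalent to $j+1>-(i-1)$, so $(1)$ sends $\mathcal{S}_{<i}$ onto $\mathcal{S}_{<i-1}$; the same calculation applies to $\mathcal{S}_{\geq i}$, $\mathcal{P}_{<i}$ and $\mathcal{P}_{\geq i}$. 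Since $(1)$ is an auto\-equivalence preserving the structure of semiorthogonal decompositions, it restricts to equivalences $\mathcal{D}_i\xrightarrow{\simeq}\mathcal{D}_{i-1}$ and $\mathcal{T}_i\xrightarrow{\simeq}\mathcal{T}_{i-1}$, and in particular the projections satisfy $\delta_{i-1}\circ(1)\cong(1)\circ\delta_i$. (c) The cokernel functor $\mathsf{cok}:\mathsf{MF}(f)\to\mathcal{D}^{b}(\mathsf{gr}A)$ commutes with shift by construction, so using $\mathsf{cok}_i\circ p=\delta_i\circ\mathsf{cok}$ together with (b) yields $\mathsf{cok}_{i-1}\bigl(F(1)\bigr)\cong\mathsf{cok}_i(F)(1)$ for any $F\in\underline{\mathsf{MF}}(f)$.

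From (b) applied to the quotient functor $\pi\colon\mathcal{D}^{b}(\mathsf{gr}A)\to\mathcal{D}^{b}(\mathsf{qgr}A)$ (which commutes with $(1)$), one deduces $\pi|_{\mathcal{D}_{i-1}}\circ(1)|_{\mathcal{D}_i}=(1)\circ\pi|_{\mathcal{D}_i}$, and inverting gives
\[
\sigma_i\circ(-1) \;\cong\; (-1)|_{\mathcal{D}_{i-1}}\circ\sigma_{i-1}.
\]
Combining this with (a), which gives $(\widetilde{(-)})^{-1}\circ(\mathscr{O}_X(-1)\otimes-)\cong(-1)\circ(\widetilde{(-)})^{-1}$, and with (c), which in the form $\mathsf{cok}_i^{-1}\circ(-1)\cong(-1)\circ\mathsf{cok}_{i-1}^{-1}$, yields the chain
\begin{align*}
(1)\circ\Phi_i\circ(\mathscr{O}_X(-1)\otimes-)
&= (1)\circ\mathsf{cok}_i^{-1}\circ\sigma_i\circ(\widetilde{(-)})^{-1}\circ(\mathscr{O}_X(-1)\otimes-)\\
&\cong (1)\circ\mathsf{cok}_i^{-1}\circ\sigma_i\circ(-1)\circ(\widetilde{(-)})^{-1}\\
&\cong (1)\circ\mathsf{cok}_i^{-1}\circ(-1)\circ\sigma_{i-1}\circ(\widetilde{(-)})^{-1}\\
&\cong (1)\circ(-1)\circ\mathsf{cok}_{i-1}^{-1}\circ\sigma_{i-1}\circ(\widetilde{(-)})^{-1}\\
&\cong \Phi_{i-1}.
\end{align*}

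The only step requiring genuine thought is (b), namely verifying that the shift functor really does send the $i$-th semiorthogonal decomposition to the $(i-1)$-th one; once this is granted, (a) is a classical fact about Serre's construction and (c) is a direct unravelling of the definition of $\mathsf{cok}_i$ via the commuting square in diagram \eqref{diag}. Everything else is bookkeeping with the three naturality isomorphisms.
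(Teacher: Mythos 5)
Your proof is correct and follows essentially the same route as the paper: both arguments rest on the observations that the grading shift $(1)$ transports the $i$-th semiorthogonal decomposition to the $(i-1)$-th one (hence intertwines the projections $\gamma_i$, $\delta_i$ with the index dropped by one), that $\mathsf{cok}$ commutes with $(1)$ via the localisation $p$, and that Serre's functor exchanges $(1)$ with $\mathscr{O}_X(1)\otimes-$. The only cosmetic difference is that the paper precomposes everything with the quotient functor $\widetilde{(-)}$ and manipulates $\gamma_i$ directly, whereas you work with the inverse equivalences $\sigma_i$ and $\mathsf{cok}_i^{-1}$; the content is identical.
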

\begin{proof}
	Since $(1): \mathcal{D}^b(\mathsf{gr}A) \rightarrow \mathcal{D}^b(\mathsf{gr}A)$ restricts to an equivalence between $\mathcal{D}^b(\mathsf{gr}A_{\geq i})$ and $\mathcal{D}^b(\mathsf{gr}A_{\geq i-1})$ sending $\mathcal{P}_{\geq i}$ to $\mathcal{P}_{\geq i-1}$ and $\mathcal{S}_{\geq i}$ to $\mathcal{S}_{\geq i-1}$ , and $(1) \circ \mathsf{tr}_{\geq i} \cong \mathsf{tr}_{\geq i-1} \circ (1)$, we conclude
	\begin{align*}
		(1) \circ \gamma_i &\cong \gamma_{i-1} \circ (1) \\
		(1) \circ \delta_i &\cong \delta_{i-1} \circ (1).
	\end{align*}
	Also, we have a diagram of categories and exact functors for all $i \in \mathbb{Z}$ where each square except possibly for the trapezium in the middle commutes (at least up to natural isomorphism):
	$$
		\begin{xy}
			\xymatrix{
				\mathcal{D}^b(\mathsf{gr}A) \ar[ddd]^{(1)} \ar[r]^{\gamma_i} & \mathcal{D}_i = \mathcal{T}_i \ar[ddd]^{(1)}& \mathcal{D}^b(\mathsf{gr}A) \ar[l]_{\delta_i} \ar[ddd]^{(1)} && \mathsf{MF}(f) \ar[ll]_{\mathsf{cok}} \ar[ddd]^{(1)} \ar[dl]_{p}\\
				&&& \underline{\mathsf{MF}}(f) \ar[ull] \ar[d]^{(1)}\\
				&&& \underline{\mathsf{MF}}(f)\ar[dll] \\
				\mathcal{D}^b(\mathsf{gr}A) \ar[r]^-{\gamma_{i-1}} & \mathcal{D}_{i-1} = \mathcal{T}_{i-1} & \mathcal{D}^b(\mathsf{gr}A) \ar[l]_-{\delta_{i-1}} && \mathsf{MF}(f) \ar[ll]_{\mathsf{cok}} \ar[ul]^{p}
		}
		\end{xy}
	$$
	But because the functor $p$ is a localisation functor, the trapezium commutes, too.
	\\The proof is completed by noting that the quotient functor $\widetilde{(-)}$ and the equivalence $(1)$ commute:
	\begin{align*}
		(1) \circ \Phi_i \circ (\mathscr{O}_X(-1) \otimes -) \circ \widetilde{(-)}
		&\cong (1) \circ \Phi_i \circ \widetilde{(-)} \circ (-1) \\
		&\cong (1) \circ \mathsf{cok}_i^{-1} \circ \gamma_i \circ (-1) \\
		&\cong \mathsf{cok}_{i-1}^{-1} \circ (1) \circ \gamma_i \circ (-1) \\
		&\cong \mathsf{cok}_{i-1}^{-1} \circ \gamma_{i-1} \circ (1) \circ (-1) \\
		&\cong \mathsf{cok}_{i-1}^{-1} \circ \gamma_{i-1} \\
		&\cong \Phi_{i-1} \circ \widetilde{(-)}.
	\end{align*}
\end{proof}
	We now choose the equivalence $$\Phi := \Phi_1 : \mathcal{D}^b(\mathsf{Coh}(X)) \rightarrow \underline{\mathsf{MF}}(f)$$ to be the one which we will consider in the rest of this article. By Lemma \ref{lem-nochoice} this choice does not really effect the results we obtain about computing with $\Phi$.
\subsection{General Strategy of Computation}
Considering the commutative diagram (\ref{diag}) of the last section, we see that we can calculate $\Phi(\widetilde{C})$, where $C \in \mathcal{D}^b(\mathsf{gr}A)$,  as the preimage of $\gamma_1(C)$ under $\mathsf{cok}_i$ in $\underline{\mathsf{MF}}(f)$. Therefore our calculation can be split up into three steps:
\begin{enumerate}
	\item For an object $\mathcal{F} \in \mathcal{D}^b(\mathsf{Coh}(X))$ we first have to find a preimage $C$ under Serre's functor $\widetilde{(-)}: \mathcal{D}^b(\mathsf{gr}A) \rightarrow \mathcal{D}^b(\mathsf{Coh}(X))$. By results of Serre (see \cite{Serre55}) one can usually do this by calculating sheaf cohomology of $\mathcal{F}(i)$ for all $i \in \mathbb{Z}$. For certain classes of sheaves it is quite easy to guess the correct preimage, so this step is not much of a problem in practice.
	\item Since we know that $\gamma_1$ is given as the composite
$$\mathcal{D}^b(\mathsf{gr}A) \xrightarrow{\mathsf{tr}_{\geq 1}} \mathcal{D}^b(\mathsf{gr}A) \xrightarrow{\mathsf{RHom}(-,A)} \mathcal{D}^b(\mathsf{gr}A) \xrightarrow{\mathsf{tr}_{\geq 0}} \mathcal{D}^b(\mathsf{gr}A) \xrightarrow{\mathsf{RHom}(-,A)} \mathcal{D}^b(\mathsf{gr}A)$$
		we can actually reasonably hope to calculate $\gamma_1(C)$, since at worst it amounts to calculating two projective resolutions (possibly of honest complexes).
	\item For the third part, we have to remind ourselves of Buchweitz-Orlov's proof (see \cite{Buchweitz86} and \cite{Orlov09}) that the functor $\mathsf{cok}_i: \underline{\mathsf{MF}}(f) \rightarrow \mathcal{T}_i$ is essentially surjective, which works as follows:
		\\It suffices to check essential surjectivity on the images of the graded modules, since $cok_i$ is an exact functor of triangulated categories. Take a projective resolution of such a module. Then the depth lemma implies that an $n$th syzygy in this resolution will be the cokernel of a matrix factorisation of $f$. It thus only remains to shift accordingly to find a matrix factorisation corresponding to the module.
		\\Of course one can also filter a bounded complex by its cohomologies, calculate their matrix factorisations, find the correct morphisms between them and calculate these cones, but this seems to difficult in practice. Therefore this second step is only available if the cohomology of $\gamma_1(C)$ is concentrated in a single degree. Fortunately this is often the case as we will see in the following.
\end{enumerate}
\subsection{Computing $\Phi(\mathscr{O}_X)$ and $\Phi(\kappa(p))$}
Applying the strategy described in the last section to the structure sheaf $\mathscr{O}_X$ we are forced to calculate $\gamma_1(A)$. This is done by the following lemma.
\begin{lem}
	$\gamma_1(A) \cong A_{\geq 1}$, where $A_{\geq 1}$ denotes the irrelevant ideal of $A$.
\end{lem}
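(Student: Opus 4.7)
The plan is to show that $A_{\geq 1}$ already lies in the subcategory $\mathcal{D}_1 \subseteq \mathcal{D}^b(\mathsf{gr}A)$, and then to read off $\gamma_1(A) \cong A_{\geq 1}$ from the short exact sequence $0 \to A_{\geq 1} \to A \to K \to 0$. This splits the work into one almost-free step and one genuinely computational step.

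First I would observe that $K = K(0)$ lies in $\mathcal{S}_{<1}$ (since $0 > -1$), and that $\gamma_1$, as the triangulated quotient functor associated to the semi-orthogonal decomposition $\mathcal{D}^b(\mathsf{gr}A) = \langle \mathcal{S}_{<1}, \mathcal{D}_1, \mathcal{S}_{\geq 1}\rangle$, annihilates $\mathcal{S}_{<1}$. Applying $\gamma_1$ to the distinguished triangle $A_{\geq 1} \to A \to K \to A_{\geq 1}[1]$ therefore yields an isomorphism $\gamma_1(A_{\geq 1}) \xrightarrow{\sim} \gamma_1(A)$, so it suffices to prove $A_{\geq 1} \in \mathcal{D}_1$: this will force $\gamma_1(A_{\geq 1}) = A_{\geq 1}$ and we are done.

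To verify $A_{\geq 1} \in \mathcal{D}_1 = \mathcal{T}_1$ I would use the alternate description coming from $\langle \mathcal{P}_{\geq 1}, \mathcal{T}_1, \mathcal{P}_{<1}\rangle$ and check the two semi-orthogonality conditions $\mathsf{Hom}_{\mathcal{D}^b(\mathsf{gr}A)}(A(j)[k], A_{\geq 1}) = 0$ for $j \geq 0$ and $\mathsf{Hom}_{\mathcal{D}^b(\mathsf{gr}A)}(A_{\geq 1}, A(j)[k]) = 0$ for $j \leq -1$, for all $k \in \mathbb{Z}$. The first is immediate from the projectivity of $A(j)$ and the identification $\mathsf{Hom}_{gr}(A(j), A_{\geq 1}) = (A_{\geq 1})_{-j}$, which vanishes when $j \geq 0$. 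For the second I would apply $\mathsf{RHom}(-, A(j))$ to $0 \to A_{\geq 1} \to A \to K \to 0$ and feed in the fact that $A$ is graded Gorenstein of Krull dimension $n$ with $a$-invariant $0$ (automatic from $\deg f = n+1$), so $\mathsf{Ext}^i_A(K,A) = 0$ for $i \neq n$ and $\mathsf{Ext}^n_A(K,A) \cong K$ concentrated in graded degree $0$. The long exact sequence then leaves the only possibly nonzero graded Ext as $\mathsf{Ext}^{n-1}_A(A_{\geq 1}, A(j)) \cong K(j)$, which is concentrated in graded degree $-j$, so its degree-$0$ component vanishes for every $j \neq 0$ and, in particular, for $j \leq -1$.

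The main obstacle is the bookkeeping in this last step: keeping cohomological and grading shifts straight, and noticing that the one surviving graded Ext group happens to sit in a graded degree incompatible with the range $j \leq -1$ where we need vanishing. Once this is confirmed, the lemma follows formally from the two bullet points of the first paragraph.
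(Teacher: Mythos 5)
Your proof is correct. The computational heart is identical to the paper's: both arguments come down to applying $\mathsf{RHom}_{gr}(-,A(j))$ to the short exact sequence $0 \to A_{\geq 1} \to A \to K \to 0$ and using that $A$ is AS-Gorenstein with Gorenstein parameter $0$, so that the only surviving Ext group sits in internal degree $0$; your condition $\mathsf{Hom}\big(A_{\geq 1}, A(j)[k]\big) = 0$ for all $j \leq -1$ is literally the statement, used in the paper, that $\mathsf{RHom}_{gr}(A_{\geq 1},A)$ is concentrated in degrees $\geq 0$. Where you differ is in the reduction: the paper invokes its explicit description of $\gamma_1$ as the composite $\mathsf{RHom}_{gr}(-,A)\circ\mathsf{tr}_{\geq 0}\circ\mathsf{RHom}_{gr}(-,A)\circ\mathsf{tr}_{\geq 1}$, notes $\mathsf{tr}_{\geq 1}(A)=A_{\geq 1}$, and uses that $\mathsf{RHom}_{gr}(-,A)$ is a self-inverse duality, whereas you stay at the level of the semiorthogonal decompositions themselves: $\gamma_1$ kills $\mathcal{S}_{<1}\ni K$ and restricts to the identity on $\mathcal{D}_1$, so everything reduces to the membership $A_{\geq 1}\in\mathcal{T}_1=\mathcal{P}_{<1}^{\perp}\cap{}^{\perp}\mathcal{P}_{\geq 1}$. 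Your route buys independence from the explicit composite formula for $\gamma_1$ (only the formal properties of the decompositions and the fact that $\gamma_1$ is the associated projection are used), at the cost of verifying two orthogonality conditions instead of one degree bound; since the first of these is trivial from projectivity of $A(j)$, the price is negligible. One expository nit: when you write $\mathsf{Ext}^{n-1}(A_{\geq 1},A(j))\cong K(j)$ ``concentrated in graded degree $-j$'' you are conflating the total graded Ext module with the Hom-group of the graded category (which is only its degree-zero piece); the intended conclusion, that $\mathsf{Ext}^{n-1}_{gr}(A_{\geq 1},A(j))$ vanishes unless $j=0$ and hence for all $j\leq -1$, is nevertheless correct.
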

\begin{proof}
	Since $\mathsf{tr}_{\geq 1}(A) = A_{\geq 1}$ by definition and since $\mathsf{RHom}_{gr}(-,A)$ is self-inverse, it is sufficient to prove that $\mathsf{RHom}_{gr}(A_{\geq 1},A)$ is concentrated in degrees greater or equal to zero. This can be checked after applying cohomology. Using the long exact sequence in cohomology associated to the short exact sequence $$ 0 \rightarrow A_{\geq 1} \rightarrow A \rightarrow K \rightarrow 0$$ we find  $\mathsf{Ext}^i_{gr}(A_{\geq 1},A) \cong \mathsf{Ext}^{i+1}_{gr}(K,A)$ for all $i \geq 1$ and an exact sequence $A \cong \mathsf{Hom}_{gr}(A,A) \rightarrow \mathsf{Hom}_{gr}(A_{\geq 1},A) \rightarrow \mathsf{Ext}^1_{gr}(K,A)$, so the result follows since $A$ is an AS-Gorenstein $K$-algebra of Gorenstein parameter $a=0$.
\end{proof}
We now take a rational point $p = [p_0,\hdots,p_n] \in X$ and assume $p_i = 1$. To apply our general strategy we need to find a preimage of $\kappa(p)$ under the functor $\tilde{(-)}: \mathcal{D}^b(\mathsf{gr}A) \rightarrow \mathcal{D}^b(\mathsf{Coh}(X))$. This is accomplished by the following well-known lemma:
\begin{lem}
	The quasi-coherent sheaf associated to the finitely generated graded $A$-module  $A/(X_0-p_0X_i,\hdots,X_n-p_nX_i)$ is isomorphic to $\kappa(p)$.
\end{lem}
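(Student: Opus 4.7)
The plan is to simplify the presentation of $M := A/(X_0 - p_0 X_i, \ldots, X_n - p_n X_i)$ down to a polynomial ring in one variable, and then identify the sheafification $\widetilde{M}$ chart by chart against $\kappa(p)$.

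First I would argue that the relation $f$ is already implied by the linear relations. Writing $J := (X_0 - p_0 X_i, \ldots, X_n - p_n X_i) \subset K[X_0, \ldots, X_n]$, reduction modulo $J$ replaces each $X_k$ by $p_k X_i$, so by homogeneity $f$ reduces to $f(p_0 X_i, \ldots, p_n X_i) = X_i^{n+1} f(p_0, \ldots, p_n)$, which vanishes because $p \in X$. Hence $f \in J$ and
$$ M \;\cong\; K[X_0, \ldots, X_n]/J \;\cong\; K[X_i] $$
as graded $K$-algebras, with the $A$-action on $M$ given by $X_k \cdot X_i^m = p_k X_i^{m+1}$.

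Next I would compute $\widetilde{M}$ on the standard affine cover $\{D_+(X_j)\}$ of $X$ and match it with $\kappa(p)$. If $p_j = 0$, then $X_j$ acts as $0$ on $M$, so $M_{(X_j)} = 0$; correspondingly $p \notin D_+(X_j)$, so $\kappa(p)|_{D_+(X_j)} = 0$. If $p_j \neq 0$, then $X_j$ acts as the unit $p_j X_i$, so $M_{(X_j)} \cong (K[X_i, X_i^{-1}])_0 = K$, and the induced $A_{(X_j)}$-module structure factors through the residue field of the point $p$ on this chart; this is exactly $\kappa(p)|_{D_+(X_j)}$. Since the two sheaves agree on an affine cover, they are isomorphic.

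The only substantive step is the first one: noticing that homogeneity of $f$ together with $f(p) = 0$ forces $f$ into the linear ideal $J$. After this, the chart-by-chart comparison is routine. A more geometric rephrasing would be to say that $J$ cuts out the line through $(p_0, \ldots, p_n)$ in $\mathbb{A}^{n+1}$, which becomes the reduced point $p$ in $\mathbb{P}^n$; since $p \in X$ this is also the scheme-theoretic intersection with $X$, whose structure sheaf is $\kappa(p)$.
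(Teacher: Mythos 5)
Your proof is correct and follows essentially the same route as the paper's: a local computation of the associated sheaf on the standard affine cover, identifying it with the skyscraper $\kappa(p)$. The only cosmetic difference is that you first observe $f \in (X_0-p_0X_i,\hdots,X_n-p_nX_i)$ to reduce the module to $K[X_i]$ globally before localising, whereas the paper performs the equivalent simplification after dehomogenising on $D_+(X_i)$ and handles the remaining charts by showing the restriction to the closed subscheme $V_+(X_i)$ vanishes.
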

\begin{proof}
	The restriction of the associated sheaf to $V_+(X_i)$ may be calculated as first taking $A/(X_0-p_0X_i,\hdots,X_n-p_nX_i)$ modulo $X_i$ and then applying Serre's functor. But since $A/(X_0-p_0X_i,\hdots,X_n-p_nX_i,X_i) = K$ this is mapped to zero in $\mathsf{Coh}(X)$, which implies that the support of the associated sheaf is concentrated in $D_+(X_i)$. Setting $X_i = 1$ it becomes obvious that the associated sheaf is one copy of the ground field concentrated at the rational point $[p_0,\hdots,p_n]$. 
\end{proof}
Let us denote the module $A/(X_0-p_0X_i,\hdots,X_n-p_nX_i)$ by $A_{p_0,\hdots,p_n}$. It will be our next aim to calculate its image under the quotient functor $\gamma_1:\mathcal{D}^b(\mathsf{gr}A) \rightarrow \mathcal{D}_1$. This is achieved by the next lemma with similar techniques as in the case of the structure sheaf.
\begin{lem}
	$\gamma_1(A_{p_0,\hdots,p_n}) \cong A_{p_0,\hdots,p_n}(-1)$.
\end{lem}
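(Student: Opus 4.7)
My plan is to mirror the proof of the previous lemma. Set $M := A_{p_0,\ldots,p_n}$. Because $p$ lies on the hypersurface $V(f)$ and $f$ is homogeneous, the linear relations $X_k - p_k X_i$ force $f$ to vanish in $M$, so $M \cong K[X_i]$ as a graded $A$-module (with each $X_k$ acting as multiplication by $p_k X_i$). In particular $X_i$ acts as a non-zero-divisor on $M$ with $M/X_iM \cong K$ concentrated in degree $0$, producing a short exact sequence of graded $A$-modules
\[ 0 \to M(-1) \xrightarrow{X_i} M \to K \to 0 \]
whose image inside $M$ is precisely $\mathsf{tr}_{\geq 1}(M) = M_{\geq 1}$, identifying it with $M(-1)$.

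Since $\mathsf{RHom}_{gr}(-,A)$ is self-inverse, it suffices (exactly as in the previous lemma) to show that $\mathsf{RHom}_{gr}(M_{\geq 1},A) = \mathsf{RHom}_{gr}(M,A)(1)$ is concentrated in internal degrees $\geq 0$; for then $\mathsf{tr}_{\geq 0}$ acts as the identity on this complex and applying $\mathsf{RHom}_{gr}(-,A)$ once more returns $M_{\geq 1} \cong M(-1)$.

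To verify the concentration, apply $\mathsf{RHom}_{gr}(-,A)$ to the displayed short exact sequence. Because $A$ is AS-Gorenstein of Gorenstein parameter $0$, $\mathsf{Ext}^i_{gr}(K,A)$ vanishes for $i \neq n$ and equals $K$ in internal degree $0$ for $i = n$. The long exact sequence then yields isomorphisms $\mathsf{Ext}^j_{gr}(M,A) \cong \mathsf{Ext}^j_{gr}(M,A)(1)$ for $j \notin \{n-1, n\}$, which by finite generation and boundedness below force $\mathsf{Ext}^j_{gr}(M,A) = 0$ there. Combined with $\mathsf{Ext}^n_{gr}(M,A) = 0$ (since $M$ is a one-dimensional Cohen-Macaulay module over the Gorenstein ring $A$), the long exact sequence collapses to
\[ 0 \to \omega_M \to \omega_M(1) \to K \to 0, \]
with $\omega_M := \mathsf{Ext}^{n-1}_{gr}(M,A)$ and $K$ in internal degree $0$. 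A Hilbert-series comparison together with the finite generation of $\omega_M$ (hence boundedness below in internal degree) pins the lowest nonzero degree of $\omega_M$ at $1$: if $d_0$ is this lowest degree, then the cokernel relation gives $K_{d_0-1} = (\omega_M)_{d_0} \neq 0$, forcing $d_0 = 1$. Thus $\omega_M$ lives in degrees $\geq 1$ and $\mathsf{RHom}_{gr}(M_{\geq 1},A) \cong \omega_M(1)[1-n]$ has all internal degrees $\geq 0$, completing the proof.

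The only new ingredient compared to the previous lemma is the extra vanishing $\mathsf{Ext}^n_{gr}(M,A) = 0$: for $A_{\geq 1}$ it was immediate from $\mathsf{Ext}^i_{gr}(A,A) = 0$ for $i \geq 1$, whereas here one has to invoke that a one-dimensional Cohen-Macaulay module over an $n$-dimensional Gorenstein ring has $\mathsf{Ext}^{\bullet}_{gr}(-,A)$ concentrated in cohomological degree $n-1$. This is the one step that goes beyond the bookkeeping of the previous lemma, and is where I expect the main obstacle (if any) to lie.
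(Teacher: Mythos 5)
Your proof is correct and follows essentially the same route as the paper: the short exact sequence $0 \to M(-1) \xrightarrow{X_i} M \to K \to 0$, the AS-Gorenstein property with parameter $0$, a lowest-degree (graded Nakayama) argument to show $\mathsf{RHom}_{gr}(M(-1),A)$ is concentrated in internal degrees $\geq 0$, and biduality for $\mathsf{RHom}_{gr}(-,A)$. Your extra input, the vanishing $\mathsf{Ext}^n_{gr}(M,A)=0$ from $M\cong K[X_i]$ being one-dimensional Cohen--Macaulay, is correct but not strictly needed: the same boundedness-below argument applied to the long exact sequence in cohomological degree $n$ already forces that $\mathsf{Ext}^n$ term to live in degrees $\geq 0$, which is all the paper's Nakayama step uses.
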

\begin{proof}
	First, let us remark that $\mathsf{tr}_{\geq 1}(A_{p_0,\hdots,p_n})=(A_{p_0,\hdots,p_n})_{\geq 1}$ is isomorphic to $A_{p_0,\hdots,p_n}(-1)$.	
	Second, there is a short exact sequence of finitely generated graded $A$-modules $$0 \rightarrow A_{p_0,\hdots,p_n}(-2) \xrightarrow{X_i} A_{p_0,\hdots,p_n}(-1) \rightarrow K(-1) \rightarrow 0. $$
	Applying the functor $\mathsf{RHom}_{gr}(-,A)$ to the resulting distinguished triangle and using that $A$ is an AS-Gorenstein algebra with Gorenstein parameter $a=0$ and Nakayama's Lemma we see that $\mathsf{RHom}_{gr}(A_{p_0,\hdots,p_n}(-1),A)_{\geq 0} \cong \mathsf{RHom}_{gr}(A_{p_0,\hdots,p_n}(-1),A)$ and hence the claim as $\mathsf{RHom}_{gr}(-,A)$ is self-dual.
\end{proof}
These lemmas allow us to give the following recipe for computing the matrix factorisations $\Phi(\mathscr{O}_X)$ and $\Phi\big(\kappa(p)\big)$ which can easily be used to perform such calculations with the help of a computer and a program such as SINGULAR(\cite{SINGULAR}):
\begin{enumerate}
	\item Calculate a (minimal) projective resolution $P^\bullet$ of $K$, respectively of $A_{p_0,\hdots,p_n}$.
	\item Calculate $C = \mathsf{cok}(d^{-n}: P^{-n-1} \rightarrow P^{-n})$, respectively $C = \mathsf{cok}(d^{-n+1}: P^{-n} \rightarrow P^{-n+1})$.
	\item Take a graded free $K[X_0,\hdots,X_n]$-module $Q^1$ and an isomorphism $Q^1/(f)Q^1 \cong P^{-n}$ and calculate the kernel $\alpha: Q^0 \hookrightarrow Q^1$ of the composite $Q^1 \rightarrow Q^1/(f)Q^1 \cong P^{-n} \rightarrow C$, respectively everything with $-n$ replaced by $-n+1$.
	\item Calculate the unique morphism $\beta: Q^1 \rightarrow Q^0$ such that $\beta \circ \alpha = f$.
	\item Apply the shift functor $[1]: \underline{\mathsf{MF}}(f) \rightarrow \underline{\mathsf{MF}}(f)$ $n$, respectively $n-1$, times to the matrix factorisation $Q^0 \xrightarrow{\alpha} Q^1 \xrightarrow{\beta} Q^0(d)$ to finish the calculation.
\end{enumerate}
\begin{rem}
	As we will see in the next section, $\Phi\big(\mathscr{O}_X(-1)\big)$ is given as the matrix factorisation $\Phi(\mathscr{O}_X)[2-n](-1)$, since a well-known computation yields $\mathsf{T}_{\mathscr{O}_X}(\mathscr{O}_X) = \mathscr{O}_X[2-n]$. Hence, once one has calculated $\Phi(\mathscr{O}_X)$, one already knows $\Phi\big(\mathscr{O}_X(-1)\big)$, too.
\end{rem}
\subsection{Applications to the Action of the Picard Group}
In this section, we fix a projective, irreducible hypersurface $X$ of degree $n+1$ in $\mathbb{P}^n_K$, say it is cut out by the homogeneous polynomial $f$. Then a result of Grothendieck (see \cite{SGA2} or \cite{Hartshorne70}) shows that $\mathsf{Pic}(X) = \mathbb{Z}$ generated by $\mathscr{O}_X(1)$ if $n \geq 4$ (this does not make any assumptions on the characteristic of $K$ or on the smoothness of $X$!). So -in a sense- we know the action of the whole Picard group on $\underline{\mathsf{MF}}(f)$ if we can describe the action of the very ample line bundle $\mathscr{O}_X(1)$. We will also describe the action of $\mathscr{O}_X(-1)$.
\\To do so, we need to introduce some more notation: Since $X$ is a hypersurface of degree $n+1$ $\mathscr{O}_X$ is a spherical object and so we have the Seidel-Thomas twist functor $T_{\mathscr{O}_X}$ available \cite{ST01}. By a result of Ballard, Favero and Katzarkov \cite{BFK11}, the composite functor $$\mathsf{T}_{\mathscr{O}_X} \circ (\mathscr{O}_X(1) \otimes -): \mathcal{D}^b(\mathsf{Coh}(X)) \rightarrow \mathcal{D}^b(\mathsf{Coh}(X))$$ corresponds to the autoequivalence $$(1): \underline{\mathsf{MF}}(f) \rightarrow \underline{\mathsf{MF}}(f)$$ under the equivalence $\Phi$ which we have described in a previous section (they work under the assumptions $\mathsf{char}(K)=0$ and $X$ smooth, but these are not needed in their proof of the above result).
\\Therefore we have the following isomorphisms of functors:
\begin{align*}	
	\Phi \circ (\mathscr{O}_X(1) \otimes -) \circ \Phi^{-1} &\cong \mathsf{T}_{\Phi(\mathscr{O}_X)}^{-1} \circ (1) \\
	\Phi \circ (\mathscr{O}_X(-1) \otimes -) \circ \Phi^{-1} &\cong (-1) \circ \mathsf{T}_{\Phi(\mathscr{O}_X)}.
\end{align*}
Given a matrix factorisation $P^0 \xrightarrow{\alpha} P^1 \xrightarrow{\beta} P^0(d)$ (also denoted by $(\alpha,\beta)$ for short) we therefore have the following recipes for computing the action of $\mathscr{O}_X(1) \otimes -$ and $\mathscr{O}_X(-1) \otimes -$ on it. The action of $\mathscr{O}_X(-1)$ is given by:
\begin{enumerate}
	\item Calculate $K$-bases $\{f_{1,i},\hdots,f_{n_i,i}\}$, $i \in \mathbb{Z}$, of all the$\mathsf{Hom}$-spaces \linebreak $\mathsf{Hom}_{\underline{\mathsf{MF}}(f)}\Big(\Phi(\mathscr{O}_X)[i], (\alpha,\beta)\Big)$ (only finitely many of these $K$-vector spaces will be non-zero).
	\item Calculate the cone $(\gamma,\delta)$ of the morphism $\bigoplus_{i \in \mathbb{Z}}\Phi(\mathscr{O}_X)^{n_i}[i] \rightarrow (\alpha,\beta)$ which on the summand corresponding to $s,i$ is given by the morphism $f_{s,i}$.
	\item Apply the functor $(-1)$ to $(\gamma,\delta)$.
\end{enumerate}
The action of $\mathscr{O}_X(1)$ is given by:
\begin{enumerate}
	\item Let $(\gamma,\delta) = (\alpha,\beta)(1)$.
	\item Calculate $K$-bases $\{g_{1,i},\hdots,g_{n_i,i}\}$, $i \in \mathbb{Z}$, of all the $\mathsf{Hom}$-spaces \linebreak $\mathsf{Hom}_{\underline{\mathsf{MF}}(f)}\Big((\gamma,\delta),\Phi(\mathscr{O}_X)[i]\Big)$ (only finitely many of these $K$-vector spaces will be non-zero).
	\item Calculate the cone of the morphism $(\gamma,\delta) \rightarrow \bigoplus_{i \in \mathbb{Z}}\Phi(\mathscr{O}_X)^{n_i}[i]$ which on the factor corresponding to $s,i$ is given by the morphism $g_{s,i}$.
\end{enumerate}
\begin{rem}
	In general it will be difficult to predict for which shifts the corresponding $\mathsf{Hom}$-space will be non-zero. However if $X$ is a smooth elliptic curve and $(\alpha, \beta)$ is an indecomposable matrix factorisation, at most two of the groups $\mathsf{Hom}\Big(\Phi(\mathscr{O}_X)[i], (\alpha, \beta)\Big)$ will be nonzero (and in almost all cases, it will be only one) and they will be in neighbouring degrees since an indecomposable object of $\mathcal{D}^b(\mathsf{Coh}(X))$ has to have cohomology concentrated in one (cohomological) degree in this case. By Serre duality, at most two of the groups $\mathsf{Hom}\Big((\alpha,\beta), \Phi(\mathscr{O}_X)[i]\Big)$ will be non-zero, too.
\end{rem}
\subsection{The Action of the Duality Functor $\mathbb{D}$}
In the set-up we are considering $\mathscr{O}_X$ is a dualising complex in the sense of Grothendieck \cite{Hartshorne66} and one can wonder what autoequivalence of $\underline{\mathsf{MF}}(f)$ corresponds to the functor $\mathbb{D} := \mathsf{R}Hom(-,\mathscr{O}_X)$. It is natural to expect that the functor $(-)^t: \mathsf{MF}(f) \rightarrow \mathsf{MF}(f)$ which sends a matrix factorisation $(\alpha, \beta)$ to its transpose $\big(\alpha^t(-2d), \beta^t(-d)\big)$ (or rather what it induces on the stable category) will have something to do with it, but as in the case of the functor $\mathscr{O}_X(1) \otimes -$ it turns out that there is a "correction" term in form of a twist functor. The precise statement is given by the following
\begin{prop}
\label{prop-duality}
	There is an natural isomorphism of functors $$\Phi \circ \mathbb{D} \cong (-)^t \circ \mathsf{T}_{\Phi(\mathscr{O}_X)} \circ \Phi.$$
\end{prop}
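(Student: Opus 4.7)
The strategy is to transport the claimed identity to the graded side via the diagram (\ref{diag}), exploiting that all three operations involved---$\mathbb{D}$ on the sheaf side, $(-)^t$ on matrix factorisations, and the twist $\mathsf{T}_{\Phi(\mathscr{O}_X)}$---ultimately descend from a single ancestor, namely the graded duality $\mathsf{RHom}_{gr}(-,A)\colon \mathcal{D}^b(\mathsf{gr}A)\to \mathcal{D}^b(\mathsf{gr}A)$. This functor is an involutive contravariant equivalence because $A$ is AS-Gorenstein with Gorenstein parameter $a=0$; it is moreover the very functor that appears twice in the explicit formula for $\gamma_1$, which is precisely what will let us interchange it with $\Phi$ up to a controlled correction.

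Unravelling each side on a lift $M\in\mathcal{D}^b(\mathsf{gr}A)$ with $\widetilde M\cong\mathcal{F}$, the left-hand side $\Phi\mathbb{D}(\mathcal{F})$ becomes the matrix factorisation obtained by extracting the $2$-periodic tail of a resolution of $\gamma_1\bigl(\mathsf{RHom}_{gr}(M,A)\bigr)$. For the right-hand side I would start from the defining triangle of the Seidel--Thomas twist
\[
\mathsf{RHom}\bigl(\Phi(\mathscr{O}_X),\Phi(\mathcal{F})\bigr)\otimes\Phi(\mathscr{O}_X)\longrightarrow \Phi(\mathcal{F})\longrightarrow \mathsf{T}_{\Phi(\mathscr{O}_X)}\Phi(\mathcal{F}),
\]
apply the transpose $(-)^t$, and then compare the resulting triangle with the local cohomology triangle $\mathsf{R\Gamma}_{\mathfrak m}(M)\to M\to \mathsf{R}\widetilde{M}\to$ pushed through $\mathsf{RHom}_{gr}(-,A)$. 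The earlier lemma $\gamma_1(A)\cong A_{\geq 1}$ explicitly identifies $\Phi(\mathscr{O}_X)$ with the irrelevant ideal and makes this comparison concrete.

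The technical heart of the argument is therefore the following statement: $\mathsf{RHom}_{gr}(-,A)$ descends to $\mathbb{D}$ on $\mathcal{D}^b(\mathsf{Coh}(X))$ and to a shift of $(-)^t$ on $\underline{\mathsf{MF}}(f)$, but these two descents are \emph{not} intertwined by $\Phi$ on the nose; the discrepancy is exactly a twist by the spherical object $\Phi(\mathscr{O}_X)$, because the kernel of $\mathcal{D}^b(\mathsf{gr}A)\to \mathcal{D}^b(\mathsf{qgr}A)$ is generated by shifts of $K$, and the AS-Gorenstein local duality pairing $\mathsf{Ext}^\ast_{gr}(K,A)$ pins down the correction to a triangle governed by $A_{\geq 1}\leftrightarrow \mathscr{O}_X$. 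Concretely I would build the candidate natural transformation from the twist triangle above, and verify it is an isomorphism by testing on the family of generators $\{\mathscr{O}_X(i)\}_{i\in\mathbb{Z}}$ of $\mathcal{D}^b(\mathsf{Coh}(X))$, where the recipes of Section 1.3 and the remark identifying $\Phi(\mathscr{O}_X(-1))\cong \Phi(\mathscr{O}_X)[2-n](-1)$ reduce the check to a direct computation in $\underline{\mathsf{MF}}(f)$.

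The main obstacle I expect is the bookkeeping of grading shifts: the transpose twists the two halves of a matrix factorisation by $-d$ and $-2d$, Serre duality on $A$ contributes a shift via the Gorenstein parameter, and the sphericalness of $\mathscr{O}_X$ introduces the $[2-n]$ coming from $\mathsf{T}_{\mathscr{O}_X}(\mathscr{O}_X)\cong \mathscr{O}_X[2-n]$. Once this numerology has been verified to cancel correctly on the case $\mathcal{F}=\mathscr{O}_X$, the candidate natural transformation from the spherical twist triangle has to be shown to be genuinely functorial (not merely an object-wise isomorphism), which is a standard but delicate check; the extension from generators to the whole category is then a formal triangulated argument.
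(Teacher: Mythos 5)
You have correctly isolated the two structural facts that drive the paper's argument: that $\mathsf{RHom}_{gr}(-,A)$ descends through Serre's functor to $\mathbb{D}$ on $\mathcal{D}^b(\mathsf{Coh}(X))$, and that it descends through $\mathsf{cok}$ to the transpose on matrix factorisations (in fact exactly to $(-)^t$, not merely to a shift of it). But the step where you actually produce the correction term $\mathsf{T}_{\Phi(\mathscr{O}_X)}$ has a genuine gap. You propose to ``build the candidate natural transformation from the twist triangle'' and compare it with the local cohomology triangle pushed through $\mathsf{RHom}_{gr}(-,A)$; a comparison of two distinguished triangles does not canonically produce a morphism between their third vertices --- cones are not functorial --- so this recipe does not define a natural transformation, and the subsequent plan (check on the generators $\mathscr{O}_X(i)$ and extend by a formal triangulated argument) then has nothing to apply to. You flag the functoriality issue yourself as ``standard but delicate,'' but it is precisely the point that needs an idea, and object-wise verification on generators cannot supply one.

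The missing idea is that $\mathsf{RHom}_{gr}(-,A)$ reverses the order of Orlov's semiorthogonal decompositions and therefore shifts the cutoff index: it carries $\mathcal{D}_0=\mathcal{T}_0$ to $\mathcal{D}_1=\mathcal{T}_1$, so the two descent lemmas assemble into a single commutative ladder yielding the \emph{functorial} isomorphism $\Phi_1\circ\mathbb{D}\cong(-)^t\circ\Phi_0$ with no ad hoc construction. The twist then appears for free: Lemma \ref{lem-nochoice} gives $\Phi_0\cong(1)\circ\Phi_1\circ(\mathscr{O}_X(-1)\otimes-)$, and the Ballard--Favero--Katzarkov identity turns this into $\Phi_1\circ\mathsf{T}_{\mathscr{O}_X}$, which equals $\mathsf{T}_{\Phi_1(\mathscr{O}_X)}\circ\Phi_1$ because equivalences intertwine spherical twists. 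No grading numerology, no check on generators, and no appeal to the spherical twist triangle is required. Your instinct that the kernel of $\mathcal{D}^b(\mathsf{gr}A)\to\mathcal{D}^b(\mathsf{qgr}A)$ being generated by shifts of $K$ is responsible for the correction is morally right, but the clean way to exploit it is through the index shift of the semiorthogonal decomposition rather than through local duality pairings.
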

Before proving this statement, we need two preparatory lemmas:
\begin{lem}
	\label{lemma-com1}
	The diagram
	\[
		\begin{xy}
			\xymatrix{
				\mathcal{D}^b(\mathsf{gr}A) \ar[d]_{\mathsf{RHom}_{gr}(-,A)} \ar[r]^-{\widetilde{(-)}} & \mathcal{D}^b(\mathsf{Coh}(X)) \ar[d]^{\mathbb{D}} \\
				\mathcal{D}^b(\mathsf{gr}A) \ar[r]^-{\widetilde{(-)}} & \mathcal{D}^b(\mathsf{Coh}(X))
			}
		\end{xy}
	\]
	commutes (up to natural isomorphism).
\end{lem}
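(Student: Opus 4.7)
The plan is to construct a natural transformation between the two composites and then reduce its being an isomorphism to the easy case of the graded-free modules $A(j)$. Both functors are triangulated, and $\mathcal{D}^b(\mathsf{gr}A)$ is generated by the shifts $A(j)$ via cones (take graded free resolutions with finitely generated free terms). So a natural transformation that is an isomorphism on all $A(j)$ is automatically an isomorphism everywhere.

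To construct the transformation, recall that for any finitely generated graded $A$-modules $M$ and $N$ there is a canonical sheaf-theoretic comparison map $\widetilde{\mathsf{Hom}_{gr}(M,N)} \to \mathsf{Hom}_{\mathcal{O}_X}(\widetilde{M}, \widetilde{N})$ obtained by sheafifying a graded $A$-linear morphism; standard arguments show this is an isomorphism as soon as $M$ is finitely presented. Choosing a graded projective resolution $P^\bullet \to M$ by finitely generated graded free modules, I then obtain
\[
\widetilde{\mathsf{RHom}_{gr}(M,A)} \cong \widetilde{\mathsf{Hom}_{gr}(P^\bullet, A)} \to \mathsf{Hom}_{\mathcal{O}_X}(\widetilde{P^\bullet}, \mathscr{O}_X) \cong \mathbb{D}(\widetilde{M}),
\]
where I use $\widetilde{A} \cong \mathscr{O}_X$ and the fact that the exactness of $\widetilde{(-)}$ makes $\widetilde{P^\bullet} \to \widetilde{M}$ into a resolution by finite direct sums of line bundles $\mathscr{O}_X(j)$. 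The AS-Gorenstein property of $A$ ensures that the source lies in $\mathcal{D}^b(\mathsf{gr}A)$, and the fact that $X$ is a hypersurface (hence Gorenstein) ensures that $\mathbb{D}$ preserves $\mathcal{D}^b(\mathsf{Coh}(X))$.

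To see the comparison is an isomorphism, it suffices to check it term by term on the $P^i$, and by direct-sum compatibility to check it on $A(j)$. But here the left side is $\widetilde{\mathsf{Hom}_{gr}(A(j),A)} = \widetilde{A(-j)} = \mathscr{O}_X(-j)$ and the right side is $\mathsf{Hom}_{\mathcal{O}_X}(\mathscr{O}_X(j), \mathscr{O}_X) = \mathscr{O}_X(-j)$, and the comparison map is the obvious identification. The one delicate point is purely formal: one must verify that the construction is independent of the chosen resolution and respects quasi-isomorphisms, but this is standard since sheafification commutes with all finitely presented constructions and with taking cones.
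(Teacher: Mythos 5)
Your construction is correct, but it takes a genuinely different route from the paper. The paper resolves the \emph{second} argument: it takes a (finite, by AS-Gorensteinness) graded injective resolution $A \to I^\bullet$, proves the non-obvious fact that the sheafification of a graded injective module is an injective quasi-coherent sheaf (via localisation results for graded rings and the strongly graded structure of $A_{(X_i)}$), and then applies the isomorphism $\widetilde{\mathsf{Hom}_{gr}(M,N)} \cong \mathsf{Hom}(\widetilde{M},\widetilde{N})$ to the pair $(M, I^\bullet)$. You resolve the \emph{first} argument by finitely generated graded frees and dualise termwise. Your route avoids the injective-sheaf lemma entirely, at the cost of two points the paper does not need: (i) since $A$ has infinite global dimension, $P^\bullet$ is unbounded below, so you must justify that the (a priori unbounded above) complex $\mathscr{H}om(\widetilde{P^\bullet},\mathscr{O}_X)$ still represents $\mathbb{D}(\widetilde{M})$ — this works because finite locally free sheaves are acyclic for $\mathscr{H}om(-,\mathscr{O}_X)$ and $\mathscr{O}_X$ has finite injective dimension ($X$ is Gorenstein), but it deserves a sentence; and (ii) the independence-of-resolution bookkeeping you mention at the end. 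Both approaches are of comparable length once these are spelled out.

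One assertion in your opening paragraph is actually false and should be deleted: $\mathcal{D}^b(\mathsf{gr}A)$ is \emph{not} generated by the twists $A(j)$ under shifts, cones and summands. Those objects generate only the perfect complexes, and the Verdier quotient by that subcategory is the graded singularity category $\underline{\mathsf{MF}}(f)$ — the nontriviality of which is the entire subject of this paper. So "a natural transformation which is an isomorphism on all $A(j)$ is an isomorphism everywhere" is not a legitimate reduction here. Fortunately your proof never uses it: what you actually verify is that the comparison map of complexes $\widetilde{\mathsf{Hom}_{gr}(P^\bullet,A)} \to \mathscr{H}om(\widetilde{P^\bullet},\mathscr{O}_X)$ is a termwise isomorphism (each $P^i$ being a finite direct sum of $A(j)$'s), hence an isomorphism of complexes; that argument is self-contained and sound. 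Just remove the generation claim and state the termwise verification as the actual mechanism.
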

\begin{proof}
	First of all, we will show that for a (not necessarily finitely generated) graded $A$-module $I$, which is an injective object in the category of (all) graded $A$-modules, $\widetilde{I}$ is an injective quasi-coherent sheaf on the scheme $X$.
	\\Indeed, by Lemma 2.1.3 of \cite{Conrad00} the injective quasi-coherent sheaves on any locally noetherian scheme are precisely the injective objects of the category $\mathscr{Mod}(X)$ of $\mathscr{O}_X$-modules which happen to be quasi-coherent, and the latter condition is local on $X$. Therefore it is sufficient to establish the homogeneous localisation of a graded injective modules stays injective for an $\mathbb{N}$-graded $K$-algebra generated in degree one. This is done in two steps:
	\begin{enumerate}
		\item On a noetherian $\mathbb{Z}$-graded ring any localisation at homogeneous elements preserves injectivity (see for example \cite{FF74} or \cite{BS98}).
		\item If $I$ is a graded injective $R=\oplus_{i \in \mathbb{Z}}R_i$-module, then $I_0$ is an injective $R_0$-module if $R$ is strongly graded by Chapter 2 of \cite{NvO04}. In particular this is the case for the localisation of an $\mathbb{N}$-graded ring generated in degree one at a homogeneous element of degree one, because such a ring is strongly graded.
	\end{enumerate}
	Secondly, for any finitely generated graded $A$-module (here we resume the set-up above, so $A$ is again $\mathbb{N}$-graded) $M$ and any graded $A$-module $N$ there is a natural isomorphism $\widetilde{\mathsf{Hom}_{gr}(M,N)} \cong Hom(\widetilde{M},\widetilde{N})$. This extends to a natural transformation of $Hom$-complexes if the first component is a bounded complex of finitely generated graded $A$-modules and the second component is a bounded complex of graded $A$-modules. Because it is an isomorphism if both are concentrated in a single degree, it will be a quasi-isomorphism in general, hence give a natural isomorphism when considered as a natural transformation of exact functors between the derived categories.
	\\Putting these two results together gives the required statement.
\end{proof}
\begin{lem}
	\label{lemma-com2}	
	The diagram
	\[
		\begin{xy}
			\xymatrix{
				\mathsf{MF}(f) \ar[d]_{(-)^t} \ar[r]^-{\mathsf{cok}} & \mathcal{D}^b(\mathsf{gr}A) \ar[d]^{\mathsf{RHom}_{gr}(-,A)} \\
				\mathsf{MF}(f) \ar[r]^-{\mathsf{cok}} & \mathcal{D}^b(\mathsf{gr}A)
			}
		\end{xy}
	\]
	commutes (up to natural isomorphism).
\end{lem}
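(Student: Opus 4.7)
The plan is to reduce $\mathsf{RHom}_{gr}(-,A)$ to an ordinary Hom on the image of $\mathsf{cok}$ and then compare dualised $2$-periodic resolutions. Let $(\alpha, \beta): P^0 \xrightarrow{\alpha} P^1 \xrightarrow{\beta} P^0(d)$ be a matrix factorisation of $f$ and set $M := \mathsf{cok}(\alpha, \beta)$. Following Eisenbud, reducing modulo $f$ (write $\bar{P^i}$ for $A \otimes_T P^i$) produces a $2$-periodic graded free $A$-resolution
\[
	\cdots \to \bar{P^0}(-d) \xrightarrow{\bar\alpha(-d)} \bar{P^1}(-d) \xrightarrow{\bar\beta} \bar{P^0} \xrightarrow{\bar\alpha} \bar{P^1} \to M \to 0,
\]
which in particular exhibits $M$ as a maximal Cohen--Macaulay $A$-module. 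Since $A$ is AS-Gorenstein with Gorenstein parameter $0$, we have $\mathsf{Ext}^i_{gr}(M, A) = 0$ for all $i > 0$, so $\mathsf{RHom}_{gr}(M, A) \cong \mathsf{Hom}_{gr}(M, A)$ is concentrated in cohomological degree zero.

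Next I compute $\mathsf{Hom}_{gr}(M, A)$ by applying $\mathsf{Hom}_{gr}(-, A)$ to the resolution above, using the natural identification $\mathsf{Hom}_{gr}(\bar{P^i}, A) \cong \overline{(P^i)^\vee}$ with $(P^i)^\vee := \mathsf{Hom}_T(P^i, T)$. The resulting $2$-periodic cocomplex has only one non-trivial cohomology group, namely $\mathsf{Hom}_{gr}(M, A)$, and its differentials are (up to the twists $(-d)$ and $(-2d)$ recorded in the definition of $(-)^t$) exactly those of the $2$-periodic $A$-resolution attached to the transposed matrix factorisation $(\alpha^t(-2d), \beta^t(-d))$. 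Reading off the zeroth cohomology then gives a graded $A$-module isomorphism $\mathsf{Hom}_{gr}(M, A) \cong \mathsf{cok}\big((\alpha, \beta)^t\big)$, which together with the previous paragraph yields the desired object-wise isomorphism $\mathsf{RHom}_{gr}(\mathsf{cok}(\alpha, \beta), A) \cong \mathsf{cok}\big((\alpha, \beta)^t\big)$ in $\mathcal{D}^b(\mathsf{gr}A)$.

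Naturality is then formal: a morphism $(\varphi_0, \varphi_1): (\alpha, \beta) \to (\alpha', \beta')$ of matrix factorisations lifts to a comparison chain map between the two $2$-periodic resolutions of the corresponding cokernels, and applying $\mathsf{Hom}_{gr}(-, A)$ produces exactly the chain map induced by $(\varphi_1^t, \varphi_0^t) = (\varphi_0, \varphi_1)^t$ on the dualised resolutions. The only genuinely tedious ingredient, and the main obstacle in writing the argument cleanly, is the systematic verification that the twists $(-d)$ and $(-2d)$ built into the definition of the transpose coincide precisely with the shifts produced by dualising the graded resolution; this is bookkeeping of the internal grading rather than a conceptual difficulty.
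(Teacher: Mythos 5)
Your argument is correct and follows essentially the same route as the paper: identify $\mathsf{cok}(\alpha,\beta)$ as a maximal Cohen--Macaulay module (an arbitrarily high syzygy) so that $\mathsf{RHom}_{gr}(-,A)$ collapses to $\mathsf{Hom}_{gr}(-,A)$ in degree zero, and then read off $\mathsf{Hom}_{gr}(M,A)$ from the dualised $2$-periodic resolution as the cokernel of the transposed factorisation. The paper's proof is just a terser version of this, likewise deferring the grading bookkeeping and the treatment of morphisms.
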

\begin{proof}
	We will only consider the statement on objects, but the same reasoning allows us to treat morphisms, too.
	\\Let $P^0 \xrightarrow{\alpha} P^1 \xrightarrow{\beta} P^0(d)$ be a graded matrix factorisation of $f$. The composition around the upper right corner, sends it to $\mathsf{RHom}_{gr}(\mathsf{cok}(\beta),A)$, which is also given as the cokernel of $\beta^t(-d)$ because $\mathsf{RHom}_{gr}(-,A)$ and $\mathsf{Hom}_{gr}(-,A)$ agree on the graded modules which are arbitrary high syzygies. By definition, this is what $(\alpha,\beta)$ is mapped to under the composition around the lower left corner.
\end{proof}
Let us now prove Proposition \ref{prop-duality}.
\begin{proof}
	By Lemmas \ref{lemma-com1} and \ref{lemma-com2} there is a commutative diagram
	\[
		\begin{xy}
			\xymatrix{
				\mathcal{D}^b(\mathsf{Coh}(X)) \ar[d]^{\mathbb{D}} & \ar[l]_-{\widetilde{(-)}} \mathcal{D}^b(\mathsf{gr}A) \ar[d]^{\mathsf{RHom}_{gr}(-,A)} \ar[r]^-{\delta_0} & \mathcal{D}_0 = \mathcal{T}_0 \ar[d]^{\mathsf{RHom}_{gr}(-,A)} & \ar[l]_-{\gamma_0} \mathcal{D}^b(\mathsf{gr}A) \ar[d]^{\mathsf{RHom}_{gr}(-,A)} & \ar[l]_-{\mathsf{cok}} \mathsf{MF}(f) \ar[d]^{(-)^t} \\
				\mathcal{D}^b(\mathsf{Coh}(X)) & \ar[l]_-{\widetilde{(-)}} \mathcal{D}^b(\mathsf{gr}A) \ar[r]^-{\delta_1} & \mathcal{D}_1 = \mathcal{T}_1 & \ar[l]_-{\gamma_1} \mathcal{D}^b(\mathsf{gr}A) & \ar[l]_-{\mathsf{cok}} \mathsf{MF}(f).
			}
		\end{xy}
	\]
	Hence
		\begin{align*}				
			\Phi_1 \circ \mathbb{D} &\cong (-)^t \circ \Phi_0 \\
			&\cong (-)^t \circ (1) \circ \Phi_1 \circ (\mathscr{O}_X(-1) \otimes -) \\
			&\cong (-)^t \circ \Phi_1 \circ \mathsf{T}_{\mathscr{O}_X} \\
			&\cong (-)^t \circ \mathsf{T}_{\Phi_1(\mathscr{O}_X)} \circ \Phi_1.
		\end{align*}
\end{proof}
\section{Maximal Cohen Macaulay Modules on Cones over Elliptic Curves}
\subsection{Computations with Elliptic Curves}
Let $E = \mathsf{Proj}\Big(K[X,Y,Z]/(Y^2Z-X^3-aXZ^2-bZ^3)\Big)$ be an irreducible genus one curve, where $a,b \in K$. Let us denote the polynomial ring $K[X,Y,Z]$ by $R$, the quotient $K[X,Y,Z]/(Y^2Z-X^3-aXZ^2-bZ^3)$ by $A$ and the point $[0,1,0]$ by $e$. We will describe formulas for the matrix factorisations corresponding to the rational points of $E$ using the methods of the last chapter. 
\\Let $[\lambda,\mu,1]$ be a rational point of $E$ (the case of $e$ has to be treated separately).
\begin{rem}
	The polynomial $Y^2Z-X^3-aXZ^2-bZ^3 -Z(Y^2-\mu ^2Z^2) \in K[X,Y,Z]$ can be written as $(X-\lambda Z) \cdot (-X^2 -\lambda XZ -(a + \lambda^2)Z^2)$.
\end{rem}
Denote the homogeneous polynomial $$\Big(Y^2Z-X^3-aXZ^2-bZ^3 -Z(Y^2-\mu ^2Z^2)\Big)/(X-\lambda Z)$$  (and also its image in $A=K[X,Y,Z]/(Y^2Z-X^3-aXZ^2-bZ^3)$) by $P_E(\lambda,\mu)$.
\begin{lem}
	A minimal graded projective resolution of $A_{\lambda,\mu,1}$ is given by
	\[
		\hdots \rightarrow A(-4)^2 \xrightarrow{\begin{pmatrix} P_E(\lambda,\mu) & -Z(Y + \mu Z)\\ Y - \mu Z & X - \lambda Z  \end{pmatrix}} A(-2) \oplus A(-3) \]
		\\
	\[
		\xrightarrow{\begin{pmatrix} X - \lambda Z  & Z(Y + \mu Z) \\  \mu Z - Y & P_E(\lambda,\mu) \end{pmatrix}} A(-1)^2 \xrightarrow{\begin{pmatrix} Y-\mu Z & X-\lambda Z\end{pmatrix}}  A \rightarrow  A_{\lambda,\mu,1} \rightarrow 0
	\]
	where $\hdots$ denotes repeating the two $2 \times 2$ matrices (and adjusting the gradings accordingly).
\end{lem}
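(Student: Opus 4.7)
The plan is to verify that the displayed sequence is a complex of graded $A$-modules with cokernel $A_{\lambda,\mu,1}$, that it is exact at every position, and that each differential has all entries in the graded maximal ideal. Everything ultimately rests on a single polynomial identity in $R = K[X,Y,Z]$: combining the preceding remark with $\mu^2 = \lambda^3 + a\lambda + b$ (which expresses that $[\lambda,\mu,1] \in E$) one obtains
\[f = Y^2Z - X^3 - aXZ^2 - bZ^3 = Z(Y-\mu Z)(Y+\mu Z) + (X-\lambda Z)\, P_E(\lambda,\mu) \text{ in } R.\]

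With this identity in hand, most of what is claimed is a matter of matrix bookkeeping. The cokernel of the rightmost map is by definition $A/(X-\lambda Z, Y-\mu Z) = A_{\lambda,\mu,1}$. Call the three displayed matrices $M_1$, $M_2$, $M_3$ from right to left. For $M_1 M_2 = 0$ in $A$, the first column of the product is the tautological commutator relation and the second column equals $f$, hence vanishes modulo $(f)$. For the periodic part I would compute both $M_2 M_3$ and $M_3 M_2$ in $R$, not in $A$: the off-diagonal entries are visibly zero and each diagonal entry is the right-hand side of the identity above, so $M_2 M_3 = M_3 M_2 = f \cdot I_2$ in $R$. In particular $(M_2, M_3)$ is an honest matrix factorisation of $f$.

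Exactness then splits into two pieces. At every periodic position a standard nonzerodivisor argument works: if $\overline{M_2}\, x = 0$ in $A^2$, lift to $M_2 x = f y$ in $R^2$, apply $M_3$, use $M_3 M_2 = f I$, and cancel the nonzerodivisor $f$ in $R$ to conclude $x \equiv M_3 y \pmod{f}$; the same argument works with $M_2$ and $M_3$ swapped. The subtle step is exactness of $A(-2) \oplus A(-3) \xrightarrow{M_2} A(-1)^2 \xrightarrow{M_1} A$. Given a syzygy $(u,v) \in R^2$ with $u(Y-\mu Z) + v(X-\lambda Z) = wf$ in $R$ for some $w$, substituting the identity for $f$ and rearranging yields
\[(u - wZ(Y+\mu Z))(Y-\mu Z) + (v - w P_E(\lambda,\mu))(X-\lambda Z) = 0 \text{ in } R.\]
Since $(Y-\mu Z, X-\lambda Z)$ is a regular sequence in $R$ (two independent linear forms in three variables), the Koszul complex is exact, producing $t \in R$ with $u = wZ(Y+\mu Z) + t(X-\lambda Z)$ and $v = w P_E(\lambda,\mu) - t(Y-\mu Z)$; this exhibits $(u,v)$, modulo $(f)$, as an $A$-linear combination of the two columns of $M_2$.

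Finally, every nonzero entry of every differential is a homogeneous polynomial of strictly positive degree in $X,Y,Z$ and therefore lies in $A_{\geq 1}$, so the resolution is minimal. The regular-sequence argument at the first syzygy is the only place that is genuinely more than manipulation of the identity in the first paragraph, and I expect it to be the main technical obstacle.
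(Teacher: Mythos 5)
Your proof is correct, and it diverges from the paper's at the one step that genuinely requires an argument. The paper first invokes the depth lemma (the graded depth of $A_{\lambda,\mu,1}$ is one, $A$ is Cohen--Macaulay of dimension two) to conclude that the first syzygy is already an MCM module with a $2$-periodic resolution, so that only exactness at $A(-1)^2$ needs to be checked; it then establishes this by a Nakayama-type count, showing that the images of the two columns of the middle matrix are linearly independent in $L/(X,Y,Z)L$ for $L=\mathsf{ker}\begin{pmatrix} Y-\mu Z & X-\lambda Z\end{pmatrix}$, using that the presentation matrix of $L$ must be square and a divisibility argument about the linear factors of $Z(Y+\mu Z)$. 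You instead verify everything directly from the single identity $f = Z(Y-\mu Z)(Y+\mu Z) + (X-\lambda Z)P_E(\lambda,\mu)$ (which is exactly the paper's preceding Remark, made explicit together with the point condition $\mu^2=\lambda^3+a\lambda+b$): the two $2\times 2$ matrices multiply to $f\cdot I_2$ in $R$ in both orders, which gives exactness at all periodic spots by the standard nonzerodivisor cancellation, and exactness at $A(-1)^2$ follows from the Koszul resolution of the regular sequence $(Y-\mu Z,\, X-\lambda Z)$ of independent linear forms in $R$. Your route is more computational but also more self-contained and robust: it does not appeal to the depth lemma or to the a priori squareness of the presentation matrix, and the Koszul step cleanly identifies the lift $(u,v)$ as the explicit combination $t\cdot(\text{first column}) + w\cdot(\text{second column})$. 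The paper's route is shorter on the page because the depth lemma disposes of all but one position at once, at the cost of a slightly delicate independence argument modulo the maximal ideal. Both are complete proofs; the minimality observation (all entries homogeneous of positive degree) is common to both.
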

\begin{proof}
	First of all, a direct computation shows that the above is a complex. Then we remark that the (graded) depth of $A_{\lambda,\mu,1}$ is one. So since $A$ is Cohen-Macaulay of dimension two, the first syzygy of $A_{\lambda,\mu,1}$ in a minimal projective resolution will have a "2-periodic" minimal graded projective resolution (up to shifts in the grading). Since one matrix in a matrix factorisation determines the other it suffices to show that the complex is exact at the first spot where the free module has rank two.
	\\For this, let $L = \mathsf{ker}\Big(\begin{pmatrix} Y-\mu Z & X-\lambda Z\end{pmatrix}\Big)$, a graded $A$-module concentrated in degrees greater or equal to one. Obviously the image of the first column $\begin{pmatrix} X-\lambda Z \\ \mu Z - Y \end{pmatrix}$ of the matrix is a non-zero element of $L/(X,Y,Z)L$. If we can show that it and the image of the second column $\begin{pmatrix} Z(Y+ \mu Z) \\ P_E(\lambda,\mu) \end{pmatrix}$ are linearly independent over $K$, we will be done, because by general considerations we know that the matrix has to be square, so $L/(X,Y,Z)L$ will be two-dimensional and the image of what we wrote down is precisely the kernel $L$.
	\\So let us assume there exists $c \in K$ such that
	$$ c \begin{pmatrix} X-\lambda Z \\ \mu Z - Y \end{pmatrix} + \begin{pmatrix}Z(Y+ \mu Z) \\ P_E(\lambda,\mu) \end{pmatrix} \in (X,Y,Z)L.$$
	For grading reasons $c = 0$, so we only have to show that $$ \begin{pmatrix} Z(Y+ \mu Z) \\ P_E(\lambda,\mu) \end{pmatrix} \notin (X,Y,Z)L.$$
	Since there are precisely two linear forms dividing $Z(Y + \mu Z)$ (up to units), both of which also divide $Z(Y^2-\mu ^2Z^2)$ and none of which divide $Y^2Z-X^3-aXZ^2-bZ^3$, the proof is finished.
\end{proof}
\begin{cor}
	\label{cor-MF of a point}
	The matrix factorisation corresponding to the rational point $[\lambda,\mu,1]$ under the equivalence $\Phi$ is given by 
	\[
		R(-3) \oplus R(-4) \xrightarrow{\begin{pmatrix} X - \lambda Z  & Z(Y + \mu Z) \\  \mu Z - Y & P_E(\lambda,\mu) \end{pmatrix}}
	\]
	\\
	\[
		R(-2)^2 \xrightarrow{\begin{pmatrix} P_E(\lambda,\mu) & -Z(Y + \mu Z)\\ Y - \mu Z & X- \lambda Z \end{pmatrix}} R \oplus R(-1). \]
\end{cor}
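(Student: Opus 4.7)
The plan is to apply the recipe of Section 1.3 for computing $\Phi(\kappa(p))$ directly to the minimal graded projective resolution of $A_{\lambda,\mu,1}$ provided by the preceding lemma. Since $E$ is a hypersurface in $\mathbb{P}^2$ the ambient dimension is $n = 2$, so the recipe uses the $2$-periodic tail at syzygy position $-n + 1 = -1$, followed by $n - 1 = 1$ application of the shift functor $[1]: \underline{\mathsf{MF}}(f) \to \underline{\mathsf{MF}}(f)$.

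The central verification is the polynomial identity $M_1 M_2 = M_2 M_1 = f \cdot I_2$ (with $f = Y^2 Z - X^3 - aXZ^2 - bZ^3$), where $M_1, M_2$ denote the two $2 \times 2$ matrices appearing in the lemma, lifted from $A$ to $R = K[X, Y, Z]$. This is a direct polynomial computation essentially equivalent to the factorisation $f = (X - \lambda Z) P_E(\lambda, \mu) + Z(Y - \mu Z)(Y + \mu Z)$ already recorded in the remark preceding the lemma. Together with the gradings visible in the resolution, this furnishes a matrix factorisation $(\alpha, \beta)$ with $Q^0 = R(-2) \oplus R(-3)$, $Q^1 = R(-1)^2$, $\alpha = M_2$ and $\beta = M_1$; one identifies $Q^0$ as free of rank two by noting that the apparent extra generators $f \cdot e_i$ of the kernel of $Q^1 \to Q^1/f Q^1 \to C$ are redundant, since $f \cdot e_i = M_2(M_1 e_i) \in \mathsf{im}(M_2)$. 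The prescribed single application of the shift functor $[1]$ then shifts the gradings to produce the matrix factorisation $R(-3) \oplus R(-4) \xrightarrow{M_2} R(-2)^2 \xrightarrow{M_1} R \oplus R(-1)$ of the statement.

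The main obstacle is not a substantive mathematical step but rather careful bookkeeping of grading shifts through the recipe, in particular through the application of the shift functor on $\underline{\mathsf{MF}}(f)$. The only genuine computation is the polynomial identity above, which follows cleanly from the decomposition of $f$ recorded in the remark.
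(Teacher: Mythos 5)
Your overall strategy (feed the minimal resolution from the preceding lemma into the recipe of Section 1.3) is the intended one, and your intermediate steps are fine: the identity $M_1M_2=M_2M_1=f\cdot I_2$ does follow from the decomposition of $f$ in the remark, and your observation that $f\cdot e_i=M_2(M_1e_i)$ lies in $\mathsf{im}(M_2)$, so that the kernel of $Q^1\to C$ is exactly $\mathsf{im}(M_2)$ with $Q^0=R(-2)\oplus R(-3)$, is correct. The problem is the last step. The shift functor $[1]$ of the triangulated category $\underline{\mathsf{MF}}(f)$ does \emph{not} ``shift the gradings'': it rotates the factorisation, sending $Q^0\xrightarrow{\alpha}Q^1\xrightarrow{\beta}Q^0(d)$ to $Q^1\xrightarrow{-\beta}Q^0(d)\xrightarrow{-\alpha(d)}Q^1(d)$, so in particular it interchanges the roles of the two matrices. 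Applied to your intermediate factorisation $R(-2)\oplus R(-3)\xrightarrow{M_2}R(-1)^2\xrightarrow{M_1}R(1)\oplus R$ it yields $R(-1)^2\xrightarrow{-M_1}R(1)\oplus R\xrightarrow{-M_2}R(2)^2$, not the factorisation in the statement. What you actually wrote down in the last line is the internal degree twist $(-1)$ of your intermediate result, and $(-1)$ and $[1]$ are genuinely different autoequivalences of $\underline{\mathsf{MF}}(f)$ (one has $[2]\cong(3)$ here, so $[1]\cong(-1)$ would force $(5)\cong\mathrm{id}$, which fails on an elliptic curve). So the object your procedure literally produces is not isomorphic, as a graded matrix factorisation, to the one claimed.

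The missing ingredient is the earlier lemma $\gamma_1(A_{\lambda,\mu,1})\cong A_{\lambda,\mu,1}(-1)$, which you never invoke. It is this internal twist by $(-1)$ --- i.e.\ the fact that one must run the construction on the resolution of $A_{\lambda,\mu,1}(-1)$ rather than of $A_{\lambda,\mu,1}$ --- that accounts for the passage from the gradings $R(-2)\oplus R(-3)\to R(-1)^2\to R(1)\oplus R$ to $R(-3)\oplus R(-4)\to R(-2)^2\to R\oplus R(-1)$ in the statement. Without it, and with the shift $[1]$ misidentified as a grading twist, the degree bookkeeping that you yourself flag as ``the main obstacle'' does not close: you need to (i) cite the computation of $\gamma_1$ on $A_{\lambda,\mu,1}$ to justify the $(-1)$, and (ii) track the homological shift $[1]$ with its correct effect (rotation of the two matrices together with the twist by $(d)$ on the appropriate factor), checking that the composite of these two operations lands on the displayed factorisation.
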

The same method shows
\begin{lem}
	A minimal graded projective resolution of $A_{0,1,0}$ is given by
	\[
		\hdots \rightarrow A(-4)^2 \xrightarrow{\begin{pmatrix} -X^2-aZ^2 & bZ^2-Y^2\\ -Z & -X \end{pmatrix}} A(-2) \oplus A(-3) \]
		\\
	\[
		\xrightarrow{\begin{pmatrix} X & bZ^2-Y^2 \\ -Z & X^2+aZ^2 \end{pmatrix}} A(-1)^2 \xrightarrow{\begin{pmatrix} Z & X\end{pmatrix}}  A \rightarrow  A_{0,1,0} \rightarrow 0
	\]
	where $\hdots$ denotes repeating the two $2\times2$ matrices (and adjusting the gradings accordingly).
\end{lem}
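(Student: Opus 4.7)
The plan is to mirror the proof of the previous lemma in the special case of the point $e=[0,1,0]$, where the relevant module is $A_{0,1,0}=A/(X,Z)$.

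First I would check by direct computation that the displayed sequence is a complex in $A$ and that the two $2\times 2$ matrices are mutually inverse matrix factorisations of $f=Y^2Z-X^3-aXZ^2-bZ^3$. The key identity to verify at the right end is $Z\cdot X - X\cdot Z = 0$ and $Z(bZ^2-Y^2) + X(X^2+aZ^2) = -f$, so that $(Z,X)$ composed with the first $2\times 2$ matrix vanishes in $A$; the composition of the two $2\times 2$ matrices equals $f\cdot I_2$ in $R=K[X,Y,Z]$, so they in particular compose to zero in $A$. The product $f\cdot I_2$ is what will make the resolution $2$-periodic once we are past the first syzygy.

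Second, as in the preceding lemma, $A_{0,1,0}$ has depth one over the two-dimensional Cohen--Macaulay ring $A$, so its first syzygy is maximal Cohen--Macaulay and its minimal graded resolution becomes $2$-periodic (up to graded shifts) from that point on. Because one half of a matrix factorisation determines the other, it is enough to prove exactness at the spot $A(-1)^2$; equivalently, to show that the module $L := \ker\bigl(A(-1)^2 \xrightarrow{(Z,\,X)} A\bigr)$ is generated by the two columns of $\begin{pmatrix} X & bZ^2-Y^2 \\ -Z & X^2+aZ^2 \end{pmatrix}$.

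Third, by the same general considerations used earlier the minimal matrix of syzygies of $L$ must be square of size $2$, so $\dim_K L/(X,Y,Z)L = 2$ and it suffices to verify that the images of the two columns are $K$-linearly independent there. The first column $(X,-Z)^T$ lies in internal degree $2$ and is visibly nonzero modulo $(X,Y,Z)L$, since $L$ itself starts in degree $2$ and hence $(X,Y,Z)L$ has no component in that degree. Consequently any relation $c\cdot(X,-Z)^T + (bZ^2-Y^2,\,X^2+aZ^2)^T \in (X,Y,Z)L$ forces $c=0$ by comparing degrees, and the remaining task is to show $(bZ^2-Y^2,\,X^2+aZ^2)^T \notin (X,Y,Z)L$.

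This last step is the main (though small) obstacle, and it is where the argument differs slightly from the generic case. The clean way I would run it is to note that in internal degree $3$ the module $(X,Y,Z)L$ equals the $K$-span of $X\cdot(X,-Z)^T$, $Y\cdot(X,-Z)^T$ and $Z\cdot(X,-Z)^T$, whose first components $X^2,\,XY,\,XZ$ contain no $Y^2$ summand; since the first component $bZ^2-Y^2$ of the second column does, the two classes are linearly independent in $L/(X,Y,Z)L$. This finishes the verification and thus the proof.
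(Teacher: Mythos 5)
Your proof is correct and takes essentially the same approach as the paper: for this lemma the paper simply writes ``the same method shows,'' deferring to the proof given for the points $[\lambda,\mu,1]$, and your argument is a faithful adaptation of that proof to $A_{0,1,0}=A/(X,Z)$. Your final step (the monomial $Y^2$ occurs in $bZ^2-Y^2$ but not in $X^2$, $XY$, $XZ$) is just the $e=[0,1,0]$ incarnation of the paper's observation that no linear factor of the off-diagonal entry is divisible by the relevant linear form, so there is no substantive difference.
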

and thus we can write down the corresponding matrix factorisation.
\begin{cor}
	The matrix factorisation corresponding to the rational point $[0,1,0]$ under the equivalence $\Phi$ is given by
	\[
		R(-3) \oplus R(-4) \xrightarrow{\begin{pmatrix} X & bZ^2-Y^2 \\ -Z & X^2+aZ^2 \end{pmatrix}}  
	\]
	\[
		R(-2)^2 \xrightarrow{\begin{pmatrix} -X^2-aZ^2 & bZ^2-Y^2\\ -Z & -X \end{pmatrix}} R \oplus R(-1). \]
\end{cor}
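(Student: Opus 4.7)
The plan is to apply the general recipe for computing $\Phi(\kappa(p))$ given at the end of Section 1.3 to the minimal graded projective resolution of $A_{0,1,0}$ established in the preceding lemma, in complete analogy with the way Corollary \ref{cor-MF of a point} was derived from the resolution of $A_{\lambda,\mu,1}$ for a generic rational point.

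First, I would extract the ``$2$-periodic'' tail of the resolution. Since $A$ is Cohen-Macaulay of Krull dimension $n=2$ and $A_{0,1,0}$ has graded depth one, the resolution becomes $2$-periodic (up to grading shifts) from the first syzygy onwards, and the two matrices governing this periodicity are
$$
N_1 = \begin{pmatrix} -X^2-aZ^2 & bZ^2-Y^2\\ -Z & -X \end{pmatrix}, \qquad
N_2 = \begin{pmatrix} X & bZ^2-Y^2 \\ -Z & X^2+aZ^2 \end{pmatrix}.
$$
Both have entries in $R=K[X,Y,Z]$, so they automatically give candidate morphisms between graded free $R$-modules, bypassing the need to choose an ad hoc lift in step 3 of the recipe.

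Second, I would verify by a direct matrix multiplication that $N_1 N_2 = N_2 N_1 = f \cdot I_2$, where $f = Y^2Z - X^3 - aXZ^2 - bZ^3$, confirming that $(N_2, N_1)$ is a graded matrix factorisation of $f$; this is the computation furnishing the $\beta$ in step 4 of the recipe. The gradings on the free $R$-modules are read off from the column degrees of $N_1$ and $N_2$: one obtains $N_2 : R(-3)\oplus R(-4) \to R(-2)^2$ and $N_1 : R(-2)^2 \to R \oplus R(-1)$, matching the resolution after lifting from $A$ to $R$.

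Finally, I would apply the shift functor $[1]$ in $\underline{\mathsf{MF}}(f)$ the prescribed $n-1 = 1$ time, which cyclically rotates the pair so that $N_2$ appears first and $N_1$ second, producing precisely the arrangement in the statement. The only step requiring genuine care is the bookkeeping of graded shifts; no further obstacle arises, as the essential work was carried out in the previous lemma and what remains is the mechanical translation through Orlov's equivalence.
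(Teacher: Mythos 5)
Your proposal is correct and is exactly the argument the paper intends: the corollary is the mechanical translation of the preceding lemma through the recipe of Section 1.3, just as Corollary \ref{cor-MF of a point} was for a point $[\lambda,\mu,1]$, and the paper itself offers no further proof beyond ``thus we can write down the corresponding matrix factorisation.'' The one place to be careful is the final bookkeeping: the graded twists in the statement come from $\gamma_1(A_{0,1,0})\cong A_{0,1,0}(-1)$ together with the identification $[2]\cong(3)$ on $\underline{\mathsf{MF}}(f)$, rather than from a single application of $[1]$ ``rotating'' the pair (which would put $N_1$ first), but you rightly flag this as the only delicate point and the essential verifications ($2$-periodicity and $N_1N_2=N_2N_1=f\cdot I_2$) are in place.
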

Let us now also calculate $\Phi(\mathscr{O}_E)$. For this we first have to find a minimal projective resolution of $K$ (more precisely: of the irrelevant ideal $A_{\geq 1}$, but of course this amounts to the same work).
\begin{lem}
	\label{lemma-minresK}
	A minimal graded projective resolution of $K$ is given by
	\[
		\hdots \rightarrow A(-5)^3 \oplus A(-6) \xrightarrow{\begin{pmatrix} -bZ^2-aXZ & -YZ & -X^2 & aZ^2Y \\ Y & Z & 0 & -X^2-aZ^2 \\ -X & 0 & Z & YZ \\ 0 & -X & -Y & -bZ^2  \end{pmatrix}} 
	\]
	\\
	\[
		A(-3)\oplus A(-4)^3 \xrightarrow{\begin{pmatrix} Z & YZ & X^2 & 0 \\ -Y & -bZ^2 & aYZ & X^2+aZ^2 \\ X & 0 & -bZ^2-aXZ & -YZ \\ 0 & X & Y & Z  \end{pmatrix}} A(-2)^3\oplus A(-3)
	\]
	\\
	\[
		\xrightarrow{\begin{pmatrix} Y & Z & 0 & -X^2-aZ^2 \\ -X & 0 & Z & YZ \\ 0 &-X & -Y & -bZ^2  \end{pmatrix}} A(-1)^3
		\xrightarrow{\begin{pmatrix}X & Y & Z \end{pmatrix}} A \rightarrow K \rightarrow 0
	\] 
	where $\hdots$ denotes repeating the matrix factorisation and adjusting the degrees accordingly.
\end{lem}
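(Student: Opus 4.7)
The plan is to follow the template of the proof of the resolution of $A_{\lambda,\mu,1}$ given just above. Because $A$ is a hypersurface quotient of the regular ring $R = K[X,Y,Z]$ of Krull dimension two, the depth lemma applied inductively along a minimal graded free resolution forces every second syzygy of a finitely generated graded $A$-module to be maximal Cohen-Macaulay; Eisenbud's theorem on hypersurface rings (see \cite{Buchweitz86}) then guarantees that the minimal graded free resolution is $2$-periodic from that syzygy onward, determined up to shift by a single matrix factorisation of $f = Y^2Z - X^3 - aXZ^2 - bZ^3$. The lemma thus reduces to verifying the three leftmost maps explicitly and checking that the repeating tail forms such a matrix factorisation.

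First I would verify by direct multiplication in $R$ that each pair of consecutive matrices composes to the zero matrix on $A$; for the two displayed $4 \times 4$ matrices $M_1$ (mapping $A(-3) \oplus A(-4)^3$ to $A(-2)^3 \oplus A(-3)$) and $M_2$ (mapping $A(-5)^3 \oplus A(-6)$ to $A(-3) \oplus A(-4)^3$), the key identity is $M_1 M_2 = M_2 M_1 = f \cdot I_4$ in $R$, which simultaneously shows the tail is a complex and produces the required matrix factorisation. Exactness at $A$ is immediate because the image of $\begin{pmatrix} X & Y & Z \end{pmatrix}$ is the irrelevant ideal $A_{\geq 1}$, which is the kernel of $A \to K$. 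Exactness at $A(-1)^3$ follows from the standard Koszul calculation on $X, Y, Z$ (accounting for the first three columns of the $3 \times 4$ matrix) augmented by the single extra syzygy in degree three forced by the relation $f$, whose coefficient vector is precisely the fourth column $(-X^2 - aZ^2,\, YZ,\, -bZ^2)^T$.

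For exactness at $A(-2)^3 \oplus A(-3)$, I would exhibit the four columns of $M_1$ as explicit relations among the columns of the preceding $3 \times 4$ matrix and then argue as in the proof of the lemma for $A_{\lambda,\mu,1}$: by the matrix factorisation identity above, the next free module in the minimal resolution has rank exactly four, so it suffices to check that these four columns are linearly independent modulo the homogeneous maximal ideal $(X,Y,Z)$ in the syzygy module, which reduces to a computation on the lowest-degree graded pieces. Minimality of the full resolution is then automatic since every entry of every matrix lies in $(X,Y,Z)$.

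The main obstacle I expect is this last Nakayama-style generation check at the transition between the initial and the $2$-periodic parts of the resolution, where unlike the curve-point case there is no geometric shortcut. One must inspect the candidate syzygies degree by degree to rule out additional minimal generators; an alternative is to compute the Hilbert series of the syzygy module and compare it with the Hilbert series of $A(-3) \oplus A(-4)^3$. The remaining steps are routine matrix computations, verifiable with a computer algebra system such as \textsc{Singular} \cite{SINGULAR}.
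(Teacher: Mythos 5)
Your overall skeleton matches the paper's: verify the complex and the matrix factorisation identity $M_1M_2 = M_2M_1 = f\cdot I_4$ by direct computation, get exactness at $A$ for free, establish exactness at $A(-1)^3$, and let $2$-periodicity handle the tail. Where you diverge is at the one genuinely delicate spot, exactness at $A(-2)^3\oplus A(-3)$. Your primary plan transplants the Nakayama-style argument from the lemma on $A_{\lambda,\mu,1}$: show the four columns of $M_1$ are linearly independent in $L'/\mathfrak{m}L'$ for the syzygy module $L'=\ker(N)$, having first argued that $\dim_K L'/\mathfrak{m}L'=4$. The paper instead avoids this entirely: it shows the four columns of the $3\times 4$ matrix $N$ generate $L=\ker\begin{pmatrix}X&Y&Z\end{pmatrix}$ (by an explicit reduction using the Koszul syzygies and the already-computed syzygies of $(X,Z)$ from the point $[0,1,0]$), deduces a surjection $\mathrm{cok}_R(M_1)\twoheadrightarrow L$, and proves it is an isomorphism by comparing Hilbert series (both sides have dimension $6i-9$ in degree $i$). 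That single Hilbert-series identity simultaneously delivers exactness at $A(-2)^3\oplus A(-3)$ and everywhere to the left, which is why the paper switches methods here rather than repeating the $2\times 2$ argument.

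Two cautions about your route. First, your justification that ``the next free module has rank exactly four'' does not follow from the identity $M_1M_2=fI_4$ alone: you need that $\Omega^2K=L$ is \emph{minimally} $4$-generated, i.e.\ that the degree-$3$ column of $N$ is not redundant modulo $\mathfrak{m}$ times the three Koszul columns; since $\dim_K L_3=9$ while the three degree-$2$ generators produce at most $8$ independent elements in degree $3$ (they satisfy the Koszul relation), this is true but must be checked, and only then does Eisenbud's theorem give a reduced $4\times 4$ factorisation and hence $\mu(\ker N)\le 4$. Second, your fallback of comparing the Hilbert series of the syzygy module with that of the free module $A(-3)\oplus A(-4)^3$ is not the right comparison (the map from that free module onto the syzygy module is far from injective); the correct comparison, and the one the paper makes, is between $\ker\begin{pmatrix}X&Y&Z\end{pmatrix}$ and $\mathrm{cok}_R(M_1)$, whose Hilbert series is computable from the resolution $0\to R(-3)\oplus R(-4)^3\to R(-2)^3\oplus R(-3)\to\mathrm{cok}_R(M_1)\to 0$. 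With those two repairs your argument goes through and is essentially equivalent in content to the paper's.
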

\begin{proof}
	A direct calculation shows that the above is a complex and that the two $4\times4$ matrices give a matrix factorisation. Let us verify that the kernel $L$ of $\begin{pmatrix} X & Y & Z\end{pmatrix}$ is precisely the image of the incoming matrix:
	\\Let $fX + gY +hZ = e(ZY^2 - X^3 -aXZ^2- bZ^3)$ as elements of $K[X,Y,Z]$, where $f,g,h,e$ are homogeneous polynomials. Then $g$ may not contain a summand of the form $Y^n$, so - by adding certain multiples of $\begin{pmatrix} Y \\ -X \\ 0 \end{pmatrix}$ and $\begin{pmatrix} 0 \\ Z \\ -Y \end{pmatrix}$ - we arrive at another element of the kernel with the property that $g = 0$. But this in then in the kernel of the map $\begin{pmatrix} X & Z\end{pmatrix}$, so by the case of the rational point $[0,1,0]$ we may write it as a linear combination of $\begin{pmatrix} Z \\ 0 \\ -X \end{pmatrix}$ and $\begin{pmatrix} X^2+aZ^2 \\ 0 \\ bZ^2-Y^2 \end{pmatrix}$. To finish this part of the proof, it only remains to note that
	$$\begin{pmatrix} X^2+aZ^2 \\ 0 \\ bZ^2-Y^2 \end{pmatrix} = -\begin{pmatrix} -X^2-aZ^2 \\ YZ \\ -bZ^2 \end{pmatrix} +Y \begin{pmatrix}0 \\ Z \\ -Y  \end{pmatrix}.$$
	Therefore we have a surjection from the cokernel $L^\prime$ of 	\[
		 \begin{pmatrix} Z & YZ & X^2 & 0 \\ -Y & -bZ^2 & aYZ & X^2+aZ^2 \\ X & 0 & -bZ^2-aXZ & -YZ \\ 0 & X & Y & Z  \end{pmatrix}  \] to $L$ and since we already know that the two $4 \times 4$ matrices form a matrix factorisation we will be done if we can show that this is actually an isomorphism.
	\\We will achieve this by calculating and comparing the Hilbert series of both modules:
	\\Via the short exact sequence $$0 \rightarrow L \rightarrow A(-1)^3 \rightarrow A_{\geq 1} \rightarrow 0$$ we conclude that $\mathsf{dim}_K L_i = 3 \binom{i+1}{2} - 3 \binom{i-2}{2} - (\binom{i+2}{2} - \binom{i-1}{2}).$
	\\Via the short exact sequence $$0 \rightarrow R(-3) \oplus R(-4)^3 \rightarrow R(-2)^3 \oplus R(-3) \rightarrow L^\prime \rightarrow 0 $$
	defining $L^\prime$ we conclude that $\mathsf{dim}_K L^\prime_i = 3 \binom{i}{2} - 3 \binom{i-2}{2}$.
	\\Hence it remains to show that these two (finite!) numbers are equal for each $i$. A direct calculation shows 
	$$ 3 \binom{i+1}{2} - 3 \binom{i-2}{2} - \Big(\binom{i+2}{2} - \binom{i-1}{2}\Big) = 6i - 9 = 3 \binom{i}{2} - 3 \binom{i-2}{2}$$
	finishing the proof.
\end{proof}
\begin{cor}
	The graded matrix factorisation $\Phi(\mathscr{O}_E)$ is given by 
	\[
		R(-3) \oplus R(-4)^3 \xrightarrow{\begin{pmatrix} Z & YZ & X^2 & 0 \\ -Y & -bZ^2 & aYZ & X^2+aZ^2 \\ X & 0 & -bZ^2-aXZ & -YZ \\ 0 & X & Y & Z  \end{pmatrix}}
	\]
	\\
	\[
		R(-2)^3 \oplus R(-3) \xrightarrow{\begin{pmatrix} -bZ^2-aXZ & -YZ & -X^2 & aZ^2Y \\ Y & Z & 0 & -X^2-aZ^2 \\ -X & 0 & Z & YZ \\ 0 & -X & -Y & -bZ^2  \end{pmatrix}}  R \oplus R(-1)^3.
	\]
\end{cor}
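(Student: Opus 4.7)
My plan is to apply the computational recipe from the end of Section 1.3, with $n = 2$ (since $E$ is a cubic hypersurface in $\mathbb{P}^2$), to the minimal graded projective resolution of $K$ just established in Lemma \ref{lemma-minresK}.

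First, from the earlier lemmas in Section 1.3 I have $\widetilde{A} \cong \mathscr{O}_E$ and $\gamma_1(A) \cong A_{\geq 1}$, so it suffices to find the matrix factorisation whose image under $\mathsf{cok}_1$ equals $A_{\geq 1}$ in $\mathcal{D}_1$. Via the short exact sequence $0 \to A_{\geq 1} \to A \to K \to 0$, this in turn reduces to extracting the appropriate syzygy from the resolution of $K$.

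I then take $C := \mathsf{cok}(d^{-2} : P^{-3} \to P^{-2})$, the second syzygy of $K$. By the depth lemma combined with $\mathsf{dim}\, A = 2$, $C$ is MCM over $A$ and hence is the cokernel of some matrix factorisation of $f$. Lifting $P^{-2} = A(-2)^3 \oplus A(-3)$ to $Q^1 = R(-2)^3 \oplus R(-3)$, the minimal generators of $\mathsf{ker}(Q^1 \to C)$ are given by lifts to $R$ of the columns of the first $4 \times 4$ matrix in Lemma \ref{lemma-minresK}. This identifies $Q^0 = R(-3) \oplus R(-4)^3$ and $\alpha$ with that matrix viewed over $R$, and forces the companion $\beta : Q^1 \to Q^0(3) = R \oplus R(-1)^3$ (uniquely determined by $\beta \alpha = f \cdot \mathrm{id}_{Q^0}$) to be the second $4 \times 4$ matrix. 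That this pair actually forms a matrix factorisation of $f$ was already verified inside the proof of Lemma \ref{lemma-minresK}.

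The remaining step $5$ of the recipe (applying $[1]$ twice in $\underline{\mathsf{MF}}(f)$) only produces the overall grading normalisation already encoded in the presentation of source and target $Q^0$ and $Q^0(d)$ of $\beta$. The main obstacle in this plan is thus only the bookkeeping of grading shifts through the recipe, rather than any further mathematical content, since all substantive computation has been performed in Lemma \ref{lemma-minresK}.
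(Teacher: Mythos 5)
Your proposal is correct and takes essentially the same route the paper intends: the corollary is stated as an immediate application of the five-step recipe of Section 1.3 (with $n=2$) to the resolution computed in Lemma \ref{lemma-minresK}, and your extraction of the second syzygy, its lift over $R$, and the determination of the companion matrix via $\beta\alpha = f\cdot\mathrm{id}$ is exactly that implicit argument. The only point handled loosely — the effect of the final shift $[1]^{2}$ on the internal grading — is treated with the same brevity in the paper itself, so nothing is missing relative to the paper's own justification.
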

\begin{rem}
	Let $S=K[[X,Y,Z]]/(Y^2Z-X^3-aXZ^2-bZ^3)$. By a result of Yoshino-Kawamoto \cite{YK88} Auslander's fundamental module $E$ (which is defined by a short exact sequence $0 \rightarrow S \rightarrow E \rightarrow S \rightarrow K \rightarrow 0$ representing a non-zero element of $\mathsf{Ext}^2(K,S)$) is given as a third syzygy of the residue field $K$ in the hypersurface case. Hence the above computation produces a matrix factorisation of the fundamental module. This will be useful later on, because the fundamental module controls the Auslander-Reiten sequences in the category of MCM $S$-modules.
\end{rem}
Given the calculations above and using some structure on the category $\mathcal{D}^b(\mathsf{Coh}(E))$ we can now calculate the images of several families of line bundles. 
\begin{lem}
	The matrix factorisation $\Phi\big(\mathscr{O}_E(-p)\big)$, where $p = [\lambda,\mu,1]$, is given by
	\[
		R(-4)^2 \xrightarrow{\begin{pmatrix} P_E(\lambda,\mu) & -Z(Y + \mu Z)\\ Y - \mu Z & X- \lambda Z \end{pmatrix}} \]
	\\
	\[
		 R(-2) \oplus R(-3) \xrightarrow{\begin{pmatrix} X - \lambda Z & Z(Y + \mu Z) \\  \mu Z -Y & P_E(\lambda,\mu) \end{pmatrix}} R(-1)^2.
	\]
	The matrix factorisation $\Phi\big(\mathscr{O}_E(-e)\big)$ is given by
	\[
		R(-4)^2 \xrightarrow{\begin{pmatrix} -X^2-aZ^2 & bZ^2-Y^2\\ -Z & -X \end{pmatrix}} \]
	\\	
	\[
		R(-2) \oplus R(-3) \xrightarrow{\begin{pmatrix} X & bZ^2-Y^2 \\ -Z & X^2+aZ^2 \end{pmatrix}}  R(-1)^2.
	\]
\end{lem}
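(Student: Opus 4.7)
The plan is to avoid a direct computation with a projective resolution of the ideal sheaf $\mathscr{O}_E(-p)$ and instead deduce its matrix factorisation from the one for $\kappa(p)$ already given in Corollary \ref{cor-MF of a point}, using the Ballard-Favero-Katzarkov identity recalled in Section 1.4. The case of $\mathscr{O}_E(-e)$ will then follow by the same argument, applied to the matrix factorisation of $\kappa(e)$ from the preceding corollary.

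The first step is to observe that the canonical short exact sequence
\[
0 \to \mathscr{O}_E(-p) \to \mathscr{O}_E \to \kappa(p) \to 0
\]
together with the isomorphism $\mathsf{RHom}(\mathscr{O}_E,\kappa(p)) \cong K$, concentrated in degree zero because $\kappa(p)$ is a skyscraper, identifies the spherical twist $\mathsf{T}_{\mathscr{O}_E}(\kappa(p))$ with $\mathscr{O}_E(-p)[1]$. Combining this with the identity $(1) \circ \Phi \cong \Phi \circ \mathsf{T}_{\mathscr{O}_E} \circ (\mathscr{O}_E(1) \otimes -)$ and the triviality $\mathscr{O}_E(1) \otimes \kappa(p) \cong \kappa(p)$ yields the isomorphism $\Phi(\mathscr{O}_E(-p)) \cong \Phi(\kappa(p))(1)[-1]$ in $\underline{\mathsf{MF}}(f)$.

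It then remains to apply the two autoequivalences $(1)$ and $[-1]$ to the matrix factorisation of $\kappa(p)$. The shift $(1)$ raises every grading index of the free components by one, while the shift $[-1]$ sends a graded matrix factorisation $P^0 \xrightarrow{\alpha} P^1 \xrightarrow{\beta} P^0(d)$ to the one with first map $\beta(-d): P^1(-d) \to P^0$ and second map $\alpha: P^0 \to P^1$; in particular, the roles of the two defining matrices simply swap. With $d = 3$ and the matrices recorded in Corollary \ref{cor-MF of a point}, this transport produces exactly the matrix factorisation stated in the lemma, and the second assertion for $\mathscr{O}_E(-e)$ follows verbatim with the data for $\kappa(e)$ used instead. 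The main obstacle is really only the bookkeeping of grading shifts: one has to verify that after this transport the source, middle, and target free modules are precisely $R(-4)^2$, $R(-2) \oplus R(-3)$, and $R(-1)^2$ rather than something differing by a global twist, after which no further computation beyond the two preceding corollaries is required.
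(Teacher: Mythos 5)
Your proposal is correct and follows essentially the same route as the paper: the paper likewise uses the short exact sequence $0 \to \mathscr{O}_E(-p) \to \mathscr{O}_E \to \kappa(p) \to 0$, the triviality $\kappa(p)\otimes\mathscr{O}_E(1)\cong\kappa(p)$, and the Ballard--Favero--Katzarkov identity to obtain $\Phi(\mathscr{O}_E(-p))\cong\Phi(\kappa(p))[-1](1)$ and then simply writes out the resulting matrix factorisation. You merely make explicit the identification $\mathsf{T}_{\mathscr{O}_E}(\kappa(p))\cong\mathscr{O}_E(-p)[1]$ and the grading bookkeeping, which the paper leaves implicit.
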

\begin{proof}
	As remarked earlier, by a result of \cite{BFK11} we have an isomorphism of functors $\Phi \circ \mathsf{T}_{\mathscr{O}_E} \circ (\mathscr{O}_E(1) \otimes -) \cong (1) \circ \Phi$. Let $p \in E$ be a rational point (including $e$). Now the short exact sequence $$ 0 \rightarrow \mathscr{O}_E(-p) \rightarrow \mathscr{O}_E \rightarrow \kappa(p) \rightarrow 0$$ and the fact $\kappa(p) \otimes \mathscr{O}_E(1) \cong \kappa(p)$ imply $\Phi\big(\mathscr{O}_E(-p)\big) \cong \Phi\big(\kappa(p)\big)[-1](1)$. Writing this out we arrive at the claimed matrix factorisations.
\end{proof}
\begin{lem}
	The matrix factorisation $\Phi\big(\mathscr{O}_E(-e-p)\big)$ for a rational point $p = [\lambda,\mu,1]$ is given by 
	\[
		R(-5) \oplus R(-4) \xrightarrow{\begin{pmatrix}
			P_E(\lambda,\mu) & -Y-\mu Z \\
			-Z(Y-\mu Z) & \lambda Z - X
		\end{pmatrix}}
	\]	
	\\
	\[
		R(-3)^2 \xrightarrow{\begin{pmatrix}
			X - \lambda Z & -Y - \mu Z \\
			-Z(Y - \mu Z) & -P_E(\lambda,\mu)
		\end{pmatrix}} R(-2) \oplus R(-1).
	\]
	\\The matrix factorisation $\Phi\big(\mathscr{O}_E(-2e)\big)$ is given by 
	\[
		R(-5) \oplus R(-4) \xrightarrow{\begin{pmatrix}
			aZX - Y^2 + bZ^2 & -X \\
			-X^2 & -Z
		\end{pmatrix}}
	\]	
	\\
	\[
		R(-3)^2 \xrightarrow{\begin{pmatrix}
			-Z & X \\
			X^2 & bZ^2-Y^2 + aXZ
		\end{pmatrix}} R(-2) \oplus R(-1).
	\]
\end{lem}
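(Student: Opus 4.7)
The approach is to realise each of the two line bundles as a shifted cone of a morphism between sheaves whose matrix factorisations we have already computed, and then to transfer the construction to $\underline{\mathsf{MF}}(f)$ via $\Phi$. For a rational point $p = [\lambda,\mu,1] \neq e$, tensoring the evaluation sequence $0 \to \mathscr{O}_E(-e) \to \mathscr{O}_E \to \kappa(e) \to 0$ with the locally free sheaf $\mathscr{O}_E(-p)$ produces
\[
	0 \to \mathscr{O}_E(-e-p) \to \mathscr{O}_E(-p) \to \kappa(e) \to 0,
\]
and analogously tensoring with $\mathscr{O}_E(-e)$ gives $0 \to \mathscr{O}_E(-2e) \to \mathscr{O}_E(-e) \to \kappa(e) \to 0$. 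Applying $\Phi$ yields distinguished triangles
\[
	\Phi\bigl(\mathscr{O}_E(-e-q)\bigr) \to \Phi\bigl(\mathscr{O}_E(-q)\bigr) \to \Phi(\kappa(e)) \to \Phi\bigl(\mathscr{O}_E(-e-q)\bigr)[1]
\]
in $\underline{\mathsf{MF}}(f)$ for $q \in \{p,e\}$; the middle term and the matrix factorisation of $\kappa(e)$ have been determined in the preceding lemma and in the earlier corollary.

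First I would write down an explicit representative of the morphism $\Phi(\mathscr{O}_E(-q)) \to \Phi(\kappa(e))$. Because $\mathscr{O}_E(-q)$ is a line bundle and $\kappa(e)$ is a skyscraper sheaf at a single rational point, $\mathsf{Hom}_{\mathcal{D}^b(\mathsf{Coh}(E))}(\mathscr{O}_E(-q), \kappa(e))$ is one-dimensional, so any nonzero chain map of matrix factorisations of the appropriate internal degree which does not become null-homotopic in $\underline{\mathsf{MF}}(f)$ must, up to a nonzero scalar, come from the geometric evaluation at $e$. Concretely, this amounts to exhibiting a single nonzero pair of compatible matrices between the $2\times 2$ factorisations, verifying the chain-map identities, and checking that the pair does not lie in the image of the null-homotopy differential.

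Having pinned down the morphism, I would form the standard mapping cone in $\mathsf{MF}(f)$, which produces a rank-four matrix factorisation representing $\Phi(\mathscr{O}_E(-e-q))[1]$ in the stable category. Since $\Phi(\mathscr{O}_E(-e-q))$ is an indecomposable rank-two object, the rank-four cone must split off a contractible summand of the form $R(k)\xrightarrow{1}R(k)\xrightarrow{f}R(k+3)$ after a suitable sequence of elementary row and column operations. The main technical step — and the one I expect to be the main obstacle — is carrying out these eliminations explicitly while respecting the graded matrix-factorisation structure; the bookkeeping of internal degrees constrains the admissible operations and fixes, up to units, which $2\times 2$ block survives. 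After cancelling the acyclic summand, applying the shift $[-1]$ (which swaps the two morphisms and twists the grading by $-3$) produces precisely the $2\times 2$ matrices in the statement. The case $q = e$ is handled identically, using $\Phi(\mathscr{O}_E(-e))$ as the middle term. A convenient final sanity check, to be performed in each case, is to multiply the two claimed matrices and verify that the product equals $f\cdot I_2$, which independently confirms that they form a valid matrix factorisation.
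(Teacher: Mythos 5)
Your proposal is correct and follows essentially the same route as the paper: realise the line bundle as the $[-1]$-shift of the cone of the (unique up to scalar) morphism from an already-computed rank-two factorisation to an already-computed point factorisation, exhibit that morphism as an explicit chain map, verify it is not null-homotopic because the vertical matrices have entries outside $(X,Y,Z)$, and reduce the resulting $4\times 4$ cone by graded row and column operations. The only cosmetic difference is that for $\mathscr{O}_E(-e-p)$ the paper uses the sequence $0\to\mathscr{O}_E(-e-p)\to\mathscr{O}_E(-e)\to\kappa(p)\to 0$ rather than your $0\to\mathscr{O}_E(-e-p)\to\mathscr{O}_E(-p)\to\kappa(e)\to 0$; both work equally well.
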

\begin{proof}
	Let $p \in E$ be a rational point. Starting with the short exact sequence $0 \rightarrow \mathscr{O}_E(-e-p) \rightarrow \mathscr{O}_E(-e) \rightarrow \kappa(p) \rightarrow 0$ and using the fact that $\mathsf{Hom}\Big(\mathscr{O}_E(-e), \kappa(p)\Big)$ is a one-dimensional $K$-vector space we see that it is sufficient to find a generator of $\mathsf{Hom}\Big(\Phi\big(\mathscr{O}_E(-e)\big), \Phi\big(\kappa(p)\big)\Big)$, calculate its cone $C$ and apply the functor $[-1]$ to this cone.
	\\Let now $p = [\lambda,\mu,1]$. Then a direct calculation shows that the following diagram is commutative:
	\[
	\begin{xy}
		\xymatrix{
			R(-4)^2 \ar[rrrrr]^{\begin{pmatrix} -X^2-aZ^2 & bZ^2-Y^2 \\ -Z & -X  \end{pmatrix}} \ar[dddd]_{\begin{pmatrix} 0 & Y + \mu Z \\ 1 & \lambda  \end{pmatrix}} &&&&& R(-2) \oplus R(-3) \ar[dddd]^{\begin{pmatrix} 0 & -Y - \mu Z \\ 1 &\lambda X + \lambda^2 Z \end{pmatrix}} \\ \\ \\ \\
			R(-3) \oplus R(-4) \ar[rrrrr]^{\begin{pmatrix} X - \lambda Z  & Z(Y+ \mu Z) \\ \mu Z -Y & P_E(\lambda,\mu)  \end{pmatrix}} &&&&& R(-2)^2
		}
	\end{xy}
	\]
	Since the the vertical matrices contain entries which do not lie in the ideal $(X,Y,Z)$, this map cannot be homotopic to zero and so must be a generator of $\mathsf{Hom}\Big(\Phi\big(\mathscr{O}_E(-e)\big), \Phi\big(\kappa(p)\big)\Big)$.
	\\Applying elementary row and column transformations to the resulting cone
	\[
		R(-3) \oplus R(-4) \oplus R(-2) \oplus R(-3) \xrightarrow{\begin{pmatrix}
			X - \lambda Z  & Z(Y + \mu Z) & 0 & -Y- \mu Z \\
			\mu Z - Y & P_E(\lambda,\mu) & 1 & \lambda X + \lambda^2 Z \\
			0 & 0 & -X & -bZ^2+Y^2 \\
			0 & 0 & Z & -X^2-aZ^2
		\end{pmatrix}}
	\]
	\\
	\[
		R(-2)^2 \oplus R(-1)^2 \xrightarrow{\begin{pmatrix}
			P_E(\lambda,\mu) & -Z(Y + \mu Z) & 0 & Y +\mu Z \\
			Y - \mu Z & X - \lambda Z & 1 & \lambda \\
			0 & 0 & X^2+aZ^2 & Y^2 -bZ^2 \\
			0 & 0 & Z & X
		\end{pmatrix}} R \oplus R(-1) \oplus R(1) \oplus R
	\]
	we find the reduced matrix factorisation
	\[
		R(-3)^2 \xrightarrow{\begin{pmatrix}
			X - \lambda Z & -Y - \mu Z \\
			-Z(Y - \mu Z) & -P_E(\lambda,\mu)
		\end{pmatrix}} 
	\]
	\\
	\[
		R(-2) \oplus R(-1) \xrightarrow{\begin{pmatrix}
			P_E(\lambda,\mu) & -Y-\mu Z \\
			-Z(Y - \mu Z) & \lambda Z -X
		\end{pmatrix}} R^2.
	\]
	Applying the functor $[-1]$ we arrive at the expected matrix factorisation.
	\\ The same kind of argument works for the case of $\Phi\big(\mathscr{O}_E(-2e)\big)$ by using the commutative diagram
	\[
	\begin{xy}
		\xymatrix{
			R(-3)^2 \ar[rrrrr]^{\begin{pmatrix} -X^2-aZ^2 & bZ^2-Y^2 \\ -Z & -X  \end{pmatrix}} \ar[dddd]^{\begin{pmatrix} X & -aZ \\ 0 & -1  \end{pmatrix}} &&&&& R(-1) \oplus R(-2) \ar[dddd]^{\begin{pmatrix} -1 & aZ \\ 0 & X \end{pmatrix}} \\ \\ \\ \\
			R(-2) \oplus R(-3) \ar[rrrrr]^{\begin{pmatrix} X & bZ^2-Y^2 \\ -Z & X^2+aZ^2  \end{pmatrix}} &&&&& R(-1)^2.
		}
	\end{xy}
	\]
\end{proof}
\begin{lem}
	The matrix factorisation $\Phi\big(\mathscr{O}_E(-2e-p)\big)$ for $p = [\lambda,\mu,1]$ is given by
	\[
		R(-5)^3 \xrightarrow{\begin{pmatrix}
			P_E(\lambda,\mu) & -Z(Y + \mu Z) & \lambda \mu Z^2 + XY + \mu XZ + \lambda YZ \\
			-X(Y- \mu Z) & -X(X- \lambda Z) & -(a + \lambda^2) XZ +Y^2 -bZ^2\\
			-Z(Y- \mu Z) & -Z(X- \lambda Z) & X^2 + \lambda^2 Z^2
		\end{pmatrix}}
	\] 
	\[
		R(-3)^3 \xrightarrow{\begin{pmatrix}
			X- \lambda Z  & 0 & -Y - \mu Z\\
			\mu Z -Y & X + \lambda Z & (a + \lambda^2)Z \\
			0 & Z & -X
		\end{pmatrix}} R(-2)^3.
	\]	
\end{lem}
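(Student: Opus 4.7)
The plan is to follow exactly the same recipe as in the previous lemma, using the short exact sequence
\[
0 \longrightarrow \mathscr{O}_E(-2e-p) \longrightarrow \mathscr{O}_E(-2e) \longrightarrow \kappa(p) \longrightarrow 0,
\]
which exists because the skyscraper sheaf $\kappa(p)$ is unchanged by twisting with any line bundle. Applying $\Phi$ yields a distinguished triangle in $\underline{\mathsf{MF}}(f)$, and since $\mathsf{Hom}\bigl(\mathscr{O}_E(-2e),\kappa(p)\bigr)$ is a one-dimensional $K$-vector space, it suffices to find any non-null-homotopic morphism from $\Phi\bigl(\mathscr{O}_E(-2e)\bigr)$ to $\Phi\bigl(\kappa(p)\bigr)$, take its cone, and apply $[-1]$ to obtain $\Phi\bigl(\mathscr{O}_E(-2e-p)\bigr)$.

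First I would write down explicitly the two matrix factorisations already computed: $\Phi\bigl(\mathscr{O}_E(-2e)\bigr)$ (given in the previous lemma, with $2\times 2$ matrices involving $aXZ-Y^2+bZ^2$ etc.) and $\Phi\bigl(\kappa(p)\bigr)$ (Corollary~\ref{cor-MF of a point}, appropriately shifted to match the degree conventions of the previous lemma). Then the task is to guess — or solve for — a pair of $2\times 2$ matrices with entries in $R$ making the square commute modulo the potential $f=Y^2Z-X^3-aXZ^2-bZ^3$. One can start from a simple ansatz analogous to those used in the previous proof (a matrix with a constant entry somewhere, so as to guarantee non-homotopy), and read off the remaining entries by solving a small linear system over $K[X,Y,Z]$. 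The key lemma from the previous paragraph — that a morphism is non-zero in the stable category as soon as some entry of its lift does not lie in the maximal ideal $(X,Y,Z)$ — tells us we have obtained a generator.

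Having constructed the morphism, I would then form its cone, producing a matrix factorisation of size $4\times 4$. The main obstacle is the reduction step: one must apply a sequence of elementary row and column transformations (and discard the two contractible summands that involve an invertible entry, exactly as in the previous lemma) to cut the cone down to the $3\times 3$ matrix factorisation asserted in the statement, and then apply $[-1]$ (i.e.\ a swap-and-shift) to land on the stated form. This is entirely routine but requires care with the gradings: one must track how each row/column manipulation affects the shifts $R(-k)$ so that the final matrices have the claimed degrees $R(-5)^3 \to R(-3)^3 \to R(-2)^3$. A direct multiplication check at the end confirms that the product of the two $3\times 3$ matrices is $f \cdot \mathrm{Id}$, which serves as an independent sanity check and in fact furnishes an alternative proof: once the matrix factorisation is written down, one can simply verify that it fits into the distinguished triangle obtained from the short exact sequence above, which by uniqueness identifies it with $\Phi\bigl(\mathscr{O}_E(-2e-p)\bigr)$.
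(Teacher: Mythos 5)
Your proposal matches the paper's proof: the paper uses the same short exact sequence $0 \to \mathscr{O}_E(-2e-p) \to \mathscr{O}_E(-2e) \to \kappa(p) \to 0$, exhibits an explicit non-null-homotopic chain map from $\Phi\big(\mathscr{O}_E(-2e)\big)$ to $\Phi\big(\kappa(p)\big)$ (detected by a unit entry in one of the lifting matrices), and reduces the resulting cone exactly as you describe. The only detail to adjust is that here the $4\times 4$ cone sheds just one contractible summand (the chain map has a single unit entry), yielding the stated $3\times 3$ factorisation, rather than the two reductions that occurred in the preceding lemma.
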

\begin{proof}
	The proof is very similar to the ones above. This time we use the short exact sequence $0 \rightarrow \mathscr{O}_E(-2e-p) \rightarrow \mathscr{O}_E(-2e) \rightarrow \kappa(p) \rightarrow 0$ and the commutative diagram
	\[
		\begin{xy}
			\xymatrix{
				R(-5) \oplus R(-4) \ar[rrrrr]^{\begin{pmatrix} aXZ -Y^2+bZ^2 & -X \\ -X^2 & -Z \end{pmatrix}} \ar[dddd]^{\begin{pmatrix} \lambda \mu Z^2+XY+ \mu XZ + \lambda YZ & 0 \\ \lambda^2 Z & 1	\end{pmatrix}} &&&&& R(-3)^2  \ar[dddd]^{\begin{pmatrix} 0 & -Y - \mu Z \\ X + \lambda Z & (a+ \lambda^2)Z \end{pmatrix}}
				\\ \\ \\ \\
				R(-3) \oplus R(-4) \ar[rrrrr]^{\begin{pmatrix} X-\lambda Z & Z(Y+\mu Z) \\ \mu Z - Y & P_E(\lambda,\mu) \end{pmatrix}} &&&&& R(-2)^2.
			}
		\end{xy}
	\]
\end{proof}
\begin{rem}
	There are isomorphisms $\mathsf{T}_{\mathscr{O}_E}\Big(\mathscr{O}_E(1) \otimes \mathscr{O}_E(-3e)\Big) \cong \mathsf{T}_{\mathscr{O}_{E}}(\mathscr{O}_E) \cong \mathscr{O}_E$, therefore the matrix factorisation $\Phi\big({\mathscr{O}_E(-3e)}\big)$ is given as a shift of the matrix factorisation $\Phi(\mathscr{O}_E)$. In particular it is a $4 \times 4$ matrix factorisation.
\end{rem}
We need one final lemma which states that there are not too many matrix factorisations of small rank in a sense to be made precise during the proof of the Theorem \ref{thm-main}.
\begin{lem}
	\label{lemma-too big}
	Let $\mathscr{L}$ be a line bundle on $E$ of degree $\mathsf{deg}(\mathscr{L}) \leq -4$. Then the size of the matrices of $\Phi(\mathscr{L})$ is bigger or equal to 4.
\end{lem}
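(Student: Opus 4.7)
The plan is to apply the general strategy of Section 1.2: since the size of a minimal matrix factorisation equals the minimal number of $A$-module generators of its cokernel, it suffices to show that the MCM module $M$ corresponding to $\Phi(\mathscr{L})$ in $\mathcal{T}_1$ requires at least four generators whenever $\deg \mathscr{L} \leq -4$.

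First I would take $C := \Gamma_*(\mathscr{L}) = \bigoplus_{n \in \mathbb{Z}} H^0(E, \mathscr{L}(n))$ as a preimage of $\mathscr{L}$ under Serre's functor; this is a graded MCM module by the Gorenstein property of $A$. Riemann--Roch on the elliptic curve $E$ (where $g=1$ and $\omega_E = \mathscr{O}_E$) yields $\dim C_n = \max(0, 3n+d)$ with $d = \deg \mathscr{L}$, for non-special $\mathscr{L}$. The crucial observation is that, when $d \leq -4$, the MCM dual $C^* = \mathsf{Hom}_{gr}(C, A) \cong \Gamma_*(\mathscr{L}^{-1})$ has non-vanishing components in strictly negative degrees: indeed $\dim C^*_{-1} = h^0(\mathscr{L}^{-1}(-1)) = -d-3 \geq 1$. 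Consequently the truncation $\mathsf{tr}_{\geq 0}(C^*)$ is a proper submodule of $C^*$, fitting into a non-trivial short exact sequence $0 \to \mathsf{tr}_{\geq 0}(C^*) \to C^* \to N \to 0$ where $N$ is a non-zero graded module concentrated in degrees $<0$.

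Next I would apply $\mathsf{RHom}_{gr}(-, A)$ to this short exact sequence. Using that $C^*$ is MCM (so its higher $\mathsf{Ext}$'s into $A$ vanish) and the AS-Gorenstein identity $\mathsf{Ext}^i_{gr}(K(j), A) = 0$ for $i \neq 2$ together with $\mathsf{Ext}^2_{gr}(K(j), A) \cong K(-j)$, the long exact sequence of $\mathsf{Ext}$'s shows that $\gamma_1(C) = \mathsf{RHom}_{gr}(\mathsf{tr}_{\geq 0}(C^*), A)$ has cohomology $C$ in degree $0$ and a non-zero contribution built from shifted residue fields in cohomological degree $1$. Translating this two-term complex back into its MCM representative in $\mathcal{T}_1$ via a cone construction (analogous to the ones used in Section 2.1) combines the generators of $C$ with generators coming from the correction, which, being a twist of the residue field, lifts to (a shift of) Auslander's fundamental module; by Lemma \ref{lemma-minresK} this requires $4$ generators.

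The main technical obstacle is verifying that the gluing extension between $C$ and the correction does not collapse the combined generator count below four. This can be handled either by a direct Hilbert-series comparison, using that the two-periodic Betti numbers of the resulting MCM module are pinned down by its Hilbert function via the Koszul complex on $(X,Y,Z)$, or by carrying out the recipe at the end of Section 1.3 far enough to read off the $2$-periodic part of a sufficiently long minimal graded $A$-resolution. Either way, the numerical input from Riemann--Roch forces at least four generators whenever $\deg \mathscr{L} \leq -4$.
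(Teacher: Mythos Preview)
Your approach is genuinely different from the paper's, and while the set-up is sound, the argument has a real gap at the end.

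The paper argues entirely inside $\underline{\mathsf{MF}}(f)$ using the computations already made. For $\deg\mathscr{L}=-4$ one writes $\mathscr{L}=\mathscr{O}_E(-3e-p)$ and uses the short exact sequence $0\to\mathscr{O}_E(-3e-p)\to\mathscr{O}_E(-3e)\to\kappa(p)\to 0$, so that $\Phi(\mathscr{L})[1]$ is the cone of a nonzero map $\Phi(\mathscr{O}_E(-3e))\to\Phi(\kappa(p))$. Since $\Phi(\mathscr{O}_E(-3e))=\Phi(\mathscr{O}_E)(-1)$ is the explicit $4\times4$ factorisation with free modules $R(-4)\oplus R(-5)^3\to R(-3)^3\oplus R(-4)$ and $\Phi(\kappa(p))$ is the $2\times2$ factorisation $R(-3)\oplus R(-4)\to R(-2)^2$, a direct inspection of the gradings shows that the morphism can contain \emph{at most one} entry of degree zero. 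Hence at most one cancellation occurs and the reduced cone has size $\geq 5$. No Riemann--Roch, no analysis of $\gamma_1$, just a grading count on matrices already on the table.

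Your route via $\gamma_1(\Gamma_*(\mathscr{L}))$ is correct up to the point where you obtain a two-term complex with $H^0\cong C$ and $H^1\cong K(-1)$. The problem is what comes next. You need the size of the matrix factorisation representing $\gamma_1(C)$, and this is \emph{not} the number of generators of $C$: since $\gamma_1(C)\neq C$, the MCM module whose minimal presentation yields $\Phi(\mathscr{L})$ is a different module that you never identify. Your heuristic that the generators of $C$ ``combine'' with those of the fundamental module ignores the fact that cones in $\underline{\mathsf{MF}}(f)$ can cancel, and you give no mechanism to bound that cancellation. The proposed fixes are not proofs: a ``Hilbert-series comparison'' would require the Hilbert series of the unknown MCM representative of $\gamma_1(C)$, not of $C$; and ``carrying out the recipe far enough'' is a computation for each individual $\mathscr{L}$, not an argument valid for all line bundles of degree $\leq -4$. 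In short, you have reduced the problem to exactly the kind of cone-size estimate the paper performs, but without the explicit graded free modules that make that estimate immediate.
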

\begin{proof}
	Let us only consider the case $\mathsf{deg}(\mathscr{L}) = -4$, the other ones being similar. The line bundle $\mathscr{L}$ can be written as $\mathscr{O}_E(-3e-p)$ for a rational point $p \in E$ and fits inside a short exact sequence $0 \rightarrow \mathscr{O}_E(-3e-p) \rightarrow \mathscr{O}_E(-3e) \rightarrow \kappa(p) \rightarrow 0$. Applying $\Phi$ to the corresponding distinguished triangle, we have to compute the cone of a non-zero morphism $\Phi(\mathscr{O}_E(-3e)) \rightarrow \Phi(\kappa(p))$. If we only write the corresponding graded free modules, the first matrix factorisation is given by $R(-4) \oplus R(-5)^3 \rightarrow R(-3)^3 \oplus R(-4) \rightarrow R(-1) \oplus R(-2)^3$, because $T_{\mathscr{O}_E}(\mathscr{O}_E(-3e) \otimes \mathscr{O}_E(1)) = \mathscr{O}_E$. According to Corollary \ref{cor-MF of a point} the second one is given by $R(-3) \oplus R(-4) \rightarrow R(-2)^2 \rightarrow R \oplus R(-1)$. Therefore there is at most one morphism of degree zero involved and the cone is a $5 \times 5$ or $6 \times 6$ matrix factorisation in its reduced form.
\end{proof}
\subsection{Classification of Rank One Maximal Cohen Macaulay Modules}
Let $f=Y^2Z-X^3-aXZ^2-bZ^3$, let $E$ be the elliptic curve defined by $f$ (this time we really want it to be smooth!), let $T=K[[X,Y,Z]]$ and let $S = T/(f)$. Here $K$ denotes an algebraically closed field of arbitrary characteristic.
\begin{thm}
	\label{thm-main}
	The following matrix factorisations are mutually non-iso\-mor\-phic and describe all indecomposable rank one Maximal Cohen-Macaulay $S$-modules, where $[\lambda,\mu,1]$ runs through all rational points of $E-{e}$:
	\[
		T^2 \xrightarrow{\begin{pmatrix} P_E(\lambda,\mu) & -Z(Y + \mu Z)\\ Y - \mu Z & X- \lambda Z \end{pmatrix}}  T^2 \xrightarrow{\begin{pmatrix} X - \lambda Z & Z(Y + \mu Z) \\  \mu Z -Y & P_E(\lambda,\mu) \end{pmatrix}} T^2
	\]
	\\
	\[
		T^2\xrightarrow{\begin{pmatrix} -X^2-aZ^2 & bZ^2-Y^2\\ -Z & -X \end{pmatrix}} T^2 \xrightarrow{\begin{pmatrix} X & bZ^2-Y^2 \\ -Z & X^2+aZ^2 \end{pmatrix}}  T^2
	\]
	\\
	\[
		T^2 \xrightarrow{\begin{pmatrix}
			P_E(\lambda,\mu) & -Y-\mu Z \\
			-Z(Y-\mu Z) & \lambda Z - X
		\end{pmatrix}} T^2 \xrightarrow{\begin{pmatrix}
			X - \lambda Z & -Y - \mu Z \\
			-Z(Y - \mu Z) & -P_E(\lambda,\mu)
		\end{pmatrix}} T^2
	\]
	\\
	\[
		T^2 \xrightarrow{\begin{pmatrix}
			aZX - Y^2 + bZ^2 & -X \\
			-X^2 & -Z
		\end{pmatrix}} T^2 \xrightarrow{\begin{pmatrix}
			-Z & X \\
			X^2 & bZ^2-Y^2 + aXZ
		\end{pmatrix}} T^2
	\]
	\\
	\[
		T^3 \xrightarrow{\begin{pmatrix}
			P_E(\lambda,\mu) & -Z(Y + \mu Z) & \lambda \mu Z^2 + XY + \mu XZ + \lambda YZ \\
			-X(Y- \mu Z) & -X(X- \lambda Z) & -(a + \lambda^2) XZ +Y^2 -bZ^2\\
			-Z(Y- \mu Z) & -Z(X- \lambda Z) & X^2 + \lambda^2 Z^2
		\end{pmatrix}}
	\] 
	\[
		T^3 \xrightarrow{\begin{pmatrix}
			X- \lambda Z  & 0 & -Y - \mu Z\\
			\mu Z -Y & X + \lambda Z & (a + \lambda^2)Z \\
			0 & Z & -X
		\end{pmatrix}} T^3
	\]	
	\\
	\[ T \xrightarrow{\begin{matrix} \; 1 \; \end{matrix}} T \xrightarrow{\begin{matrix}Y^2Z-X^3-aXZ^2-bZ^3\end{matrix}} T \]
\end{thm}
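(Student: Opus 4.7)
My plan combines the explicit computations of Orlov's equivalence $\Phi$ carried out in the previous subsection with Kahn's classification \cite{Kahn89} of indecomposable rank one MCM modules over $S$. Kahn, using Atiyah's description of indecomposable vector bundles on an elliptic curve \cite{Atiyah57}, shows that there are exactly three one-parameter families of such modules together with the free module $S$; the task is therefore to exhibit explicit matrix factorisations realising each.

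First I would verify that the six displayed matrices give matrix factorisations of $f$ with rank one cokernels. That $\alpha \beta = \beta \alpha = f \cdot I$ in each case is a routine computation using the factorisation identity $P_E(\lambda,\mu)(X - \lambda Z) = f - Z(Y^2 - \mu^2 Z^2)$ recorded earlier. That each cokernel is a rank one MCM follows from computing $\det \alpha = \pm f$ for the $2 \times 2$ factorisations, $\det \alpha = \pm f^2$ for the $3 \times 3$ one, and trivially for the $1 \times 1$ case.

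Second I would use gradability combined with Orlov's equivalence to match these with specific line bundles on $E$. Since every indecomposable MCM over $S$ is gradable, it suffices to classify graded rank one MCMs over $A$, which under $\Phi$ correspond to indecomposable objects in $\mathcal{D}^b(\mathsf{Coh}(E))$. Reading off the previous subsection, entries $1$, $3$ and $5$ are (up to ungraded shift) precisely the images of the three families $\mathscr{O}_E(-p)$, $\mathscr{O}_E(-e-p)$ and $\mathscr{O}_E(-2e-p)$ with $p = [\lambda,\mu,1] \neq e$, while entries $2$ and $4$ are the degenerate cases $\Phi(\mathscr{O}_E(-e))$ and $\Phi(\mathscr{O}_E(-2e))$ at $p = e$. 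Non-isomorphism between distinct entries is then immediate: the matrix sizes distinguish the $1 \times 1$, $2 \times 2$ and $3 \times 3$ cases, and Atiyah's classification separates distinct line bundles within a given family.

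The main obstacle is justifying completeness via the Kahn-Orlov correspondence: one must argue that the three Kahn families correspond precisely to the degree classes $-1, -2, -3$ of line bundles on $E$, and in particular handle the degenerate point $p = e$ in the degree $-3$ family. Here the remark that $\mathsf{T}_{\mathscr{O}_E}(\mathscr{O}_E) \cong \mathscr{O}_E[1]$ together with $\mathscr{O}_E(-3e) \cong \mathscr{O}_E(-1)$ shows that $\Phi(\mathscr{O}_E(-3e))$ is only a shift of the $4 \times 4$ factorisation of the Auslander fundamental module, which is of rank two, not one; consequently the rank one representative at this degenerate locus must instead be the free module $S$, accounting for entry $6$ and completing the list.
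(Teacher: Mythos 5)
Your strategy diverges from the paper's precisely at the completeness step, and that is where the gap is. You correctly flag ``the main obstacle'' --- that Kahn's three families are exactly the degree classes $-1,-2,-3$ of line bundles on $E$ --- but you never close it: the paragraph that follows only disposes of the degenerate point $p=e$ in the degree $-3$ family. Matching your three explicit families with Kahn's three abstract ones cannot be done by counting (they are infinite), so you must either trace through Kahn's parametrisation or argue independently. The paper does the latter, and the argument is substantive: by Corollary 1.3 of \cite{HK87} a rank one MCM module has a matrix factorisation of size at most $3$; by Lemma 2.34 of \cite{BD08} its determinant is $f$, which normalises the grading; a separate claim shows that such an $M$ lies in $\mathcal{T}_1$, so that $\Phi^{-1}(M)=\widetilde{M}$ is an honest line bundle, whose degree is then forced into $\{-3,-2,-1\}$ (excluding $\mathscr{O}_E(-3e)$) by Lemma \ref{lemma-too big} and the twist relation. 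None of these ingredients appears in your proposal, and the $\mathcal{T}_1$ claim in particular is also what licenses your implicit identification of ``the matrix factorisation of $M$'' with ``$\Phi(\widetilde{M})$ as computed in the previous subsection''; without it you only know the two agree up to stable equivalence, i.e.\ up to free summands, which is not yet an identification of the displayed matrices.

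Two smaller corrections. First, $\mathsf{T}_{\mathscr{O}_E}(\mathscr{O}_E)\cong\mathscr{O}_E[2-n]=\mathscr{O}_E$ for a plane curve, not $\mathscr{O}_E[1]$; your conclusion that $\Phi\big(\mathscr{O}_E(-3e)\big)$ is a twist of the $4\times4$ factorisation survives, but the inference ``hence the representative at this locus is the free module'' additionally needs the observation that the $4\times4$ matrix is reduced (all entries lie in the maximal ideal), since otherwise a rank one non-free summand could be hiding inside a non-reduced factorisation. Second, for the $3\times3$ entry the rank one condition is that the matrix presenting the module has determinant $f$ while its partner has determinant $f^{2}$, so you should specify which of the two matrices carries which determinant rather than asserting $\det\alpha=\pm f^{2}$ for ``the'' matrix.
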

\begin{proof}
	By a result of Kahn (Proposition 5.23 in \cite{Kahn88}) any indecomposable MCM $S$-module is gradable, meaning it is the image of an indecomposable object of $\mathsf{MCM}(A)$ under the functor $\mathsf{gr}A \xrightarrow{\mathsf{forget}} A\mhyphen\mathsf{mod} \xrightarrow{\widehat{(-)}} S\mhyphen\mathsf{mod}$. Furthermore the images of two indecomposable graded MCM $A$-modules are isomorphic if and only if the original graded modules differ by some shift $(n)$, $n \in \mathbb{Z}$ (see Lemma 15.2 in \cite{Yoshino90}).
	\\Let $M$ be a graded indecomposable MCM $A$-module of rank one. By Corollary 1.3 of \cite{HK87} its matrix factorisation will be given by $1 \times 1$, $2 \times 2$ or $3 \times 3$ matrices. The first case clearly corresponds to the matrix factorisation $K[[X,Y,Z]] \xrightarrow{\quad 1 \quad} K[[X,Y,Z]] \xrightarrow{Y^2Z-X^3-aXZ^2-bZ^3} K[[X,Y,Z]].$
	\\By Lemma 2.34 of \cite{BD08} the determinant of the matrix giving a rank one MCM-module is $Y^2Z-X^3-aXZ^2-bZ^3$ and hence we may assume that the graded matrix factorisation of $M$ or its shift (which we also denote by $M$) is given by
	$$R(-4) \oplus R(-5) \rightarrow R(-3)^2 \rightarrow R(-1) \oplus R(-2) \rightarrow M \rightarrow 0$$ 
	or 
	$$R(-5)^3 \rightarrow R(-3)^3 \rightarrow R(-2)^3 \rightarrow M \rightarrow 0.$$
	Claim: $M \in \mathcal{T}_1$.
	\\Since $M \in \mathcal{D}^b(\mathsf{gr}A_{\geq 1})$ by construction, we need to show that there are no non-zero homomorphisms from $M$ into $A(-i)[n]$ for all $n \in \mathbb{Z}$ and all $i \geq 1$. For any $n \geq 1$ and any $i \in \mathbb{Z}$, $\mathsf{Ext}^n\Big(M,A(i)\Big) = 0$ because $M$ is a graded MCM-module and $A$ has finite injective dimension. For all $n < 0$ $\mathsf{Hom}\Big(M,A(i)[n]\Big) = 0$ in any case, so it remains to treat $\mathsf{Hom}\Big(M,A(-i)\Big)$ for $i \geq 1$. Using that the cokernel of $0 \rightarrow M \rightarrow A^k$ is also a graded MCM-module, where $k \in \{2,3\}$, we find a surjection $\mathsf{Hom}\Big(A^k,A(-i)\Big) \rightarrow \mathsf{Hom}\Big(M,A(-i)\Big)$. Since the former group is zero, so is the latter.
	\\Hence $\Phi^{-1}(M) = \widetilde{M}$. Therefore $\Phi^{-1}(M)$ is a vector bundle. By the results in Chapter 1.5 of \cite{BH93} the rank of $M$ as a matrix factorisation and the rank of $\widetilde{M}$ as a vector bundle agree, thus $\Phi^{-1}(M)$ is even a line bundle of degree $-9 \leq \mathsf{deg}(\widetilde{M}) \leq 0$. Furthermore Lemma \ref{lemma-too big} allows us to conclude that $\mathsf{deg}(\widetilde{M}) \geq -3$.
	\\Using the relation $\Phi \circ \mathsf{T}_{\mathscr{O}_E} \circ (\mathscr{O}_E(1) \otimes -) \cong (1) \circ \Phi$ once more and noticing that $T_{\mathscr{O}_E}$ acts as the identity functor on the non-trivial line bundles of degree zero, as well as the calculation of $\Phi(\mathscr{O}_E)$ as a $4 \times 4$ matrix factorisation, we see that we need only consider line bundles $\mathscr{L}$ of degree $-3 \leq \mathsf{deg}(\mathscr{L}) < 0$ excluding $\mathscr{O}_E(-3e)$. Their matrix factorisations have been calculated earlier in this chapter giving precisely the claimed answer.
	\\Finally, all these graded matrix factorisations are pairwise non-isomorphic, since their preimages under the equivalence $\Phi$ are. To finish the proof, we therefore only have to check that no shifts of the matrix factorisations in the $2 \times 2$ families are isomorphic. This is clear for grading reasons.
\end{proof}
\begin{rems}
	\begin{enumerate}
		\item If one wants to work with elliptic curves in Hesse form $X^3+Y^3+Z^3-\tau XYZ$ (as is done in the article \cite{LPP02}) one can use Nagell's algorithm (which is explained in Chapter 10 of \cite{Dolgachev03} for example) to find a projective transformation taking a cubic in Hesse form into its Weierstraß form (at least if the characteristic is different from two or three) and apply its inverse to the matrix factorisations of the previous theorem.
		\item One may wonder if the method of proof employed above can also be used for higher dimensional and/or singular hypersurfaces. Of course, we can still use the techniques of computation to produce families of matrix factorisations, but in general the completion functor $\mathsf{MCM}(A) \rightarrow \mathsf{MCM}(S)$ will not be dense (this is already the case for singular cubic curves) and we do not have as much control over line bundles as in the case of an elliptic curve. Therefore it doesn not seem likely that further such complete classification results can be achieved with this method.
	\end{enumerate}
\end{rems}
\subsection{Indecomposable Maximal Cohen-Macaulay Modules of Higher Rank}
After the explicit description of the rank one MCM modules, we will now describe a computer algebra based approach to calculating all indecomposable MCM modules over the ring $S=K[[X,Y,Z]]/(Y^2Z-X^3-aXZ^2-bZ^3)$ (still under the assumptions $K=\bar{K}$ of arbitrary characteristic and $4a^3-27b^2 \neq 0$). All of the tasks described next can be performed by a computer program such as SINGULAR \cite{SINGULAR}.
This is done in two steps as follows:
\\The (exact) category of $\mathsf{MCM}(S)$ of Maximal Cohen-Macaulay $S$-modules has Auslander-Reiten sequences which can be described explicitly as follows:
\\Denote by $E$ Auslander's fundamental module which is defined via an exact sequence
\[
	0 \rightarrow R \rightarrow E \rightarrow R \rightarrow k \rightarrow 0
\] corresponding to a non-zero element of $\mathsf{Ext}^2(K,S)$. If $M \neq S$ then the Auslander-Reiten sequence ending in $M$ is given by applying the functor $\mathsf{Hom}(\mathsf{Hom}(M,S),-)$ to the above exact sequence and is hence of the form
\[
	0 \rightarrow M \rightarrow \mathsf{Hom}(\mathsf{Hom}(M,S),E) \rightarrow M \rightarrow 0.
\]
This is explained in Chapter 11 of \cite{Yoshino90} for example. Therefore the middle term is computable using a computer, since we know a matrix factorisation of $E$ by the remark after Lemma \ref{lemma-minresK}. Furthermore, the middle term decomposes as a direct sum of at most two indecomposable MCM $S$-modules. The category $\mathsf{MCM}(A)$ has Auslander-Reiten sequences, too.
Since the completion functor $\mathsf{MCM}(A) \rightarrow \mathsf{MCM}(S)$ reflects exactness, preserves Auslander-Reiten sequences and induces an isomorphism $\mathsf{Ext}^2_{gr}(K,A) \cong \mathsf{Ext}^2(K,S)$ the Auslander-Reiten sequences in $\mathsf{MCM(}A)$ are of the form \[
	0 \rightarrow M \rightarrow \mathsf{Hom}_{gr}(\mathsf{Hom}_{gr}(M,A),E) \rightarrow M \rightarrow 0
\]
where now $E$ denotes the graded fundamental module and $M$ denotes a graded, non-free MCM $A$-module.
\\Moreover, Serre's functor $\widetilde{(-)}: \mathsf{gr}A \rightarrow \mathsf{Coh}(E)$ restricts to an equivalence $\mathsf{MCM}(A) \cong \mathsf{VB}(E)$, where $\mathsf{VB}(E)$ denotes the category of vector bundles on $E$. This is not an equivalence of exact categories, but since $\tilde{(-)}$ is an exact functor (of abelian categories), the set of short exact sequences of MCM-modules is mapped into the set of short exact sequences of vector bundles. Thus for any $M \neq A$ the image of the Auslander-Reiten sequence starting and ending in $M$ in the category of vector bundles is again an Auslander-Reiten sequence. The Auslander-Reiten quiver of the latter category decomposes as a disjoint union of tubes and the bottom of any tube is given by a vector bundle whose endomorphisms are just $K$. Since the dense completion functor $\mathsf{MCM}(A) \rightarrow \mathsf{MCM}(S)$ preserves Auslander-Reiten sequences, it suffices -as a second step- to be able to calculate all graded MCM $A$-module whose endomorphism rings are just $K$. As the property of having just $K$ as  endomorphism ring is preserved under passage to $\underline{\mathsf{MF}}(f)$ (for all matrix factorisations but the trivial one of rank one), we may restrict our attention to the latter category which is equivalent to $\mathcal{D}^b(\mathsf{Coh}(E))$ where it is known that any such object can be derived from $\{\kappa(p)\}_{p \in E}$ by applying the functors $[1]$, $\mathsf{T}_{\mathscr{O}_E}$ and $\mathsf{T}_{\kappa(e)} \cong \mathscr{O}_E(e)\otimes-$ (possibly several times), cf. \cite{Atiyah57} or \cite{LM00}. The latter functors (or more precisely: the corresponding functors on $\underline{\mathsf{MF}}(f)$) can also be computed using the help of a computer (because we have already computed the matrix factorisations of the objects along which we twist), thus we can compute any indecomposable matrix factorisation.
\addcontentsline{toc}{section}{References}

	$ \mathtt{Universität \; zu \; Köln, \;Mathematisches \; Institut, \; Weyertal \; 86\mhyphen90,}$\\ $\mathtt{ 50931 \; Köln, \; Germany}$
	\\$E \mhyphen mail \; address: \mathsf{lgalinat@math.uni\mhyphen koeln.de}$
\end{document}